\let\@fnsymbol\@arabic
\newtheorem{theorem}{Theorem}\numberwithin{theorem}{section}
\newtheorem{definition}[theorem]{Definition}
\newtheorem{corollary}[theorem]{Corollary}
\newtheorem{proposition}[theorem]{Proposition}
\newtheorem{conjecture}[theorem]{Conjecture}
\newtheorem{theoremm}{Theorem}\numberwithin{theoremm}{subsection}
\newtheorem{deffinition}[theoremm]{Definition}
\newtheorem{lemmma}[theoremm]{Lemma}
\newtheorem{nottation}[theoremm]{Notation}
\newtheorem{propposition}[theoremm]{Proposition}
\numberwithin{theoremmm}{subsubsection}
\theoremstyle{remark}
\newtheorem{example}[theorem]{Example}
\newtheorem{remmark}[theoremm]{Remark}
\newcommand{\Rad}{\operatorname{Rad}}
\newcommand{\Aut}{\operatorname{Aut}}
\newcommand{\PSL}{\operatorname{PSL}}
\newcommand{\lcm}{\operatorname{lcm}}
\newcommand{\ord}{\operatorname{ord}}
\newcommand{\Sym}{\operatorname{Sym}}
\newcommand{\Soc}{\operatorname{Soc}}
\newcommand{\id}{\operatorname{id}}
\newcommand{\fix}{\operatorname{fix}}
\newcommand{\Out}{\operatorname{Out}}
\newcommand{\e}{\mathrm{e}}
\newcommand{\PGL}{\operatorname{PGL}}
\newcommand{\GL}{\operatorname{GL}}
\newcommand{\Frob}{\operatorname{Frob}}
\newcommand{\G}{\mathcal{G}}
\newcommand{\D}{\operatorname{D}}
\newcommand{\Mod}[1]{\ (\textup{mod}\ #1)}
\newcommand{\F}{\operatorname{F}}
\newcommand{\E}{\operatorname{E}}
\newcommand{\Var}{\operatorname{Var}}
\newcommand{\IN}{\mathbb{N}}
\newcommand{\IF}{\mathbb{F}}
\newcommand{\Inndiag}{\operatorname{Inndiag}}
\newcommand{\Outdiag}{\operatorname{Outdiag}}
\newcommand{\supp}{\operatorname{supp}}
\newcommand{\SB}{\operatorname{SB}}
\newcommand{\Perm}{\operatorname{Perm}}
\newcommand{\MPO}{\operatorname{MPO}}
\newcommand{\BAD}{\operatorname{BAD}}
\newcommand{\Suz}{\operatorname{Suz}}
\newcommand{\Supp}{\operatorname{Supp}}
\begin{document}

\title{Words, permutations, and the nonsolvable length of a finite group}

\author{Alexander Bors\thanks{The University of Western Australia, Centre for the Mathematics of Symmetry and Computation, 35 Stirling Highway, Crawley 6009, WA, Australia. \newline E-mail: \href{mailto:alexander.bors@uwa.edu.au}{alexander.bors@uwa.edu.au} \newline The first author is supported by the Austrian Science Fund (FWF), project J4072-N32 \enquote{Affine maps on finite groups}.} \and Aner
 Shalev\thanks{Einstein Institute of Mathematics, Hebrew University, Jerusalem 91904, Israel. \newline E-mail: \href{mailto:shalev@math.huji.ac.il}{shalev@math.huji.ac.il} \newline The second author acknowledges the support of ISF grant 686/17, BSF grant 2016072 and the Vinik chair of mathematics which he
holds. \newline 2010 \emph{Mathematics Subject Classification}: Primary: 20E10, 20P05. Secondary: 20B05, 20D06, 20F22. \newline \emph{Key words and phrases:} Finite groups, Nonsolvable length, Probabilistic identities, Identities, Word maps}}

\date{\today}

\maketitle

\abstract{We study the impact of certain identities and probabilistic identities on the structure of finite groups.
More specifically, let $w$ be a nontrivial word in $d$ distinct variables and let $G$ be a finite group for which the word map $w_G:G^d\rightarrow G$ has a fiber of size at least $\rho|G|^d$ for some fixed $\rho > 0$. We show that, for certain words $w$, this implies that
$G$ has a normal solvable subgroup of index bounded above in terms of $w$ and $\rho$. We also show that, for a larger family
of words $w$, this implies that the nonsolvable length of $G$ is bounded above in terms of $w$ and $\rho$, thus
providing evidence in favor of a conjecture of Larsen.
Along the way we obtain results of some independent interest, showing roughly that most elements of large finite permutation groups have large support.}

\newpage

\section{Introduction}\label{sec1}

The impact of identities on the structure of groups has been a central research topic for over a century.
Major examples include the Burnside problems and their solutions, the theory of group varieties, as well as
parts of combinatorial and geometric group theory.

In the realm of finite groups, Zelmanov's solution to the Restricted Burnside Problem bounds the order of a $d$-generator
finite group satisfying the power identity $x^n \equiv 1$ in terms of $d$ and $n$ \cite{Z90, Z91}.
The Hall-Higman reduction of this problem to $p$-groups involves bounding the $p$-length of solvable groups satisfying this identity for all primes $p$ \cite{HH56a}. A recent related result of Segal bounds the generalized
Fitting height of finite groups satisfying $x^n \equiv 1$ in terms of $n$ \cite[Theorem 10]{Seg18a}.

More generally, in recent years there has been extensive interest in probabilistic identities (defined below) of finite
and residually finite groups. Finitely generated linear groups which satisfy a probabilistic identity were shown in \cite{LS16}
to be virtually solvable. Arbitrary residually finite groups satisfying a probabilistic identity were shown in \cite{LS18a}
(using results from \cite{Bor17}) to have nonabelian upper composition factors of bounded size. Probabilistically nilpotent finite
and infinite groups were recently studied in \cite{Sh18} and in \cite{MTVV18}.

It is easy to see that every finite group $G$ has a normal series each of whose factors is either solvable or a direct product of nonabelian finite simple groups. The smallest number of nonsolvable factors in a shortest such series is defined by Khukhro and Shumyatsky in \cite{KS15a} to be the \emph{nonsolvable length of $G$}, and is denoted by $\lambda(G)$ (see also Section \ref{sec2} below for an alternative definition, which was also already given in \cite[first paragraph of the Introduction]{KS15a}); while this concept was explicitly introduced and studied in \cite{KS15a}, the idea of writing a finite group $G$ as an extension of two finite groups with smaller nonsolvable lengths for inductive purposes is already implicit in the Hall-Higman paper, see \cite[proof of Theorem 4.4.1]{HH56a}.

The main purpose of this paper is to present some ideas relating identities and probabilistic identities in finite groups with the nonsolvable length, and sometimes with the index of the solvable radical. We combine some machinery already developed by the first author in
\cite{Bor17a} (building on earlier work of Nikolov from \cite{Nik16a}) with some new methods. Let us now explain this in some more detail.

For a positive integer $d$, denote by $\F(X_1,\ldots,X_d)$ the free group freely generated by $X_1,\ldots,X_d$. Elements of these groups are called \emph{words}. For the definition of \emph{probabilistic identity}, let $w\in\F(X_1,\ldots,X_d)$ be a nontrivial word. Then for every (not necessarily finite) group $G$, one has the word map $w_G:G^d\rightarrow G$, induced by substitution into $w$. If $G$ is finite and $g\in G$, it makes sense to define
\[
p_{w,G}(g):=\frac{1}{|G|^d}|\{(g_1,\ldots,g_d)\in G^d\mid w_G(g_1,\ldots,g_d)=g\}|,
\]
the proportion in $G^d$ of the fiber of $g$ under $w_G$. For profinite groups $G$, $p_{w,G}(g)$ denotes the (normalized) Haar measure (in $G^d$)
of the fiber $w_G^{-1}(g)$. We say that \emph{$G$ satisfies a probabilistic identity with respect to $w$ and $\rho\in\left(0,1\right]$} if and only if there is an element $g\in G$ such that $p_{w,G}(g)\geq\rho$. A residually finite group is said to satisfy a probabilistic identity if its
profinite completion satisfies a probabilistic identity.

In this paper, we will be interested in the following property of nontrivial words:

\begin{definition}\label{nlbDef}
A nontrivial word $w\in\F(X_1,\ldots,X_d)$ is called \emph{nonsolvable-length-bounding} (or NLB for short) if and only if there is a function $f_w:\left(0,1\right]\rightarrow\left[0,\infty\right)$ such that for every $\rho\in\left(0,1\right]$ and every finite group $G$, if $G$ satisfies a probabilistic identity with respect to $w$ and $\rho$, then $\lambda(G)\leq f_w(\rho)$.
\end{definition}

We can now state the following.

\begin{conjecture}\label{conj1} All nontrivial words are NLB.
\end{conjecture}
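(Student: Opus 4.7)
The plan is to attack Conjecture \ref{conj1} by induction on the nonsolvable length, with the inductive step driven by a quantitative \emph{spreading lemma} for nonsolvable quotients. Concretely, for a nontrivial word $w$ and a finite group $G$ with a normal subgroup $K\triangleleft G$ such that $G/K$ has a nonabelian minimal normal subgroup, one aims to show
\[
\max_{g\in G} p_{w,G}(g) \;\leq\; (1-\epsilon_{w})\cdot \max_{\bar g\in G/K}p_{w,G/K}(\bar g)
\]
for some $\epsilon_w>0$ depending only on $w$ and on the composition factors involved. Iterating this contraction through a normal series witnessing $\lambda(G)$ yields $\max_g p_{w,G}(g)\leq (1-\epsilon_w)^{\lambda(G)}$, so the hypothesis $p_{w,G}(g_0)\geq\rho$ forces $\lambda(G)\leq \log(1/\rho)/\log(1/(1-\epsilon_w))=:f_w(\rho)$, which is the desired NLB conclusion.

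Before iterating, the natural first step is to invoke the theorem from \cite{LS18a} on residually finite groups satisfying a probabilistic identity (which builds on \cite{Bor17}) to reduce to the setting where every nonabelian composition factor of $G$ lies in a finite set $\S(w,\rho)$ of simple groups, determined by $w$ and $\rho$. After this reduction, each nonsolvable layer of $G$ is a finite direct product of groups from $\S(w,\rho)$, possibly twisted by diagonal and outer automorphisms, and the remaining task is to control the \emph{depth} of the tower while the width is already tamed.

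The heart of the argument is the spreading lemma itself. For a quotient $H$ of $G$ with nonabelian minimal normal subgroup $M\cong S^k$, letting $\pi:H\to H/M=:\bar H$ be the quotient, a direct tuple-counting argument gives the exact factorization
\[
p_{w,H}(h)=p_{w,\bar H}(\pi(h))\cdot q(\pi(h),h),
\]
where $q(\bar h,\cdot)$ is the conditional distribution on the coset $\pi^{-1}(\bar h)$ of the image of $w_H$ given that its $\bar H$-projection equals $\bar h$. To obtain the spreading one wants a uniform bound $\max_h q(\bar h,h)\leq 1-\epsilon_w$. Such a bound should be extractable from character-theoretic estimates for word maps on $S$ (in the spirit of Liebeck--Shalev and Larsen--Shalev), applied to the conditional distribution on lifts of a fixed tuple in $\bar H^d$, using that $M$ acts nontrivially on itself by conjugation.

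The principal obstacle is precisely this spreading statement in its necessary quantitative and uniform form. It is essentially a strong version of Shalev's conjecture that $\max_{s\in S}p_{w,S}(s)\leq |S|^{-\epsilon_w}$ for every nontrivial $w$ and every sufficiently large nonabelian finite simple $S$, and moreover a version uniform enough to survive iteration through many nonsolvable layers and to accommodate the passage from $S$ to its (possibly twisted) direct powers inside $H$. Such uniform bounds are known for certain classes of words (power words via character estimates, commutator words via Frobenius-type formulas), but a general uniform spreading estimate valid for all nontrivial $w$ is exactly what remains out of reach and what keeps Conjecture \ref{conj1} genuinely open.
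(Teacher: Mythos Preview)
The statement is a conjecture that the paper explicitly leaves open (it is attributed to Larsen, and the paper notes it is unknown even for $\rho=1$). There is therefore no proof in the paper to compare against. Your write-up is likewise not a proof but a strategy sketch that candidly identifies its own obstruction, so on that level nothing is wrong.

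It is still worth contrasting your route with what the paper actually does for its partial results. Your spreading-lemma approach seeks a uniform contraction $\max_h p_{w,H}(h)\leq(1-\epsilon_w)\max_{\bar h}p_{w,\bar H}(\bar h)$ at each passage through a nonabelian chief factor. But even after the Larsen--Shalev reduction to finitely many composition factor types, this contraction can fail outright, not merely quantitatively: if $w$ is a coset identity over some $S\in\S(w,\rho)$---that is, $w_{\Aut(S)}$ is constant on some product $\prod_i S\alpha_i$---then for the corresponding tuple $(\bar\alpha_1,\ldots,\bar\alpha_d)\in(\Aut(S)/S)^d$ every lift hits the same element, so the conditional factor $q$ equals $1$. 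No character estimate on $S$ itself can repair this, since the obstruction lives on cosets of $S$ in $\Aut(S)$. This is precisely why the paper introduces the WMB/VSMB hierarchy: the hypothesis in Theorem~\ref{mainTheo1} (almost VSMB) is exactly the condition that rules out such coset identities for the \emph{proper variations} of $w$ that arise in the coset-wise counting.

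The paper's route to its partial results is also structurally different from an iterated contraction. Rather than descending layer by layer, it bounds $\lambda(G)$ in a single step by controlling the permutation part $P(H_1(G))$ of the first semisimple quotient: Lemma~\ref{ppbNlbLem} shows that a bound on $\MPO(H_1(G))$ already bounds $|H_2(G)|$ and hence $\lambda(G)$, and Lemma~\ref{avsmbIsPpbLem} obtains that bound (under the almost VSMB hypothesis) by proving that many elements of $P(H_1(G))$ have small support and then invoking the permutation-group Theorem~\ref{permGroupTheo}. Your scheme would instead require the spreading inequality to hold uniformly at every one of the $\lambda(G)$ layers, which---because of the coset-identity phenomenon above---is a strictly harder and currently inaccessible statement.
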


This conjecture, due to Michael Larsen (private communication), seems very challenging, in view of the fact that it is even unknown
for $\rho = 1$, namely when $w$ is an identity of $G$.

\begin{conjecture}\label{conj2} The nonsolvable length of a finite group which satisfies a nontrivial identity
$w \equiv 1$ is bounded above in terms of $w$.
\end{conjecture}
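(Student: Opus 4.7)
My plan is to combine a classical finiteness theorem of Jones on word identities with a layer-by-layer analysis of the nonsolvable structure of $G$, drawing on the permutation-theoretic machinery developed in this paper. Since every subquotient of $G$ also satisfies $w \equiv 1$, the theorem of Jones --- asserting that only finitely many nonabelian finite simple groups satisfy any given nontrivial identity --- implies that every nonabelian composition factor of $G$ has order at most some constant $N = N(w)$. The problem thus reduces to the following: within the class of finite groups whose nonabelian composition factors have order at most $N$, does the identity $w \equiv 1$ bound the nonsolvable length?

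To attack the reduced problem, I would unwind the upper nonsolvable series. Let $R = \Rad(G)$ and let $\widetilde{S} \trianglelefteq G$ be the preimage in $G$ of the socle of $G/R$; by the previous step, this socle is a direct product $S_1 \times \cdots \times S_k$ of nonabelian simple groups of order at most $N$. Since $G/R$ has trivial solvable radical, it embeds into $\Aut(\widetilde{S}/R)$, and Schreier's conjecture (known via the classification of finite simple groups) implies that $\Out(\widetilde{S}/R)$ has a solvable normal subgroup with quotient isomorphic to a subgroup $\Sigma$ of $\Sym(k)$ permuting the simple factors. Hence $G/\widetilde{S}$ is an extension of a subgroup of $\Sym(k)$ by a solvable group, and iterating this step reduces bounding $\lambda(G)$ to bounding the total depth of the tower of permutation groups $\Sigma$ arising at each level.

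The \textbf{main obstacle} is to control this tower of permutation groups. Each $\Sigma$ is a subquotient of $G$ and hence satisfies $w \equiv 1$, so by Jones it contains no large alternating section; but this alone does not bound $\lambda(\Sigma)$, since iterated wreath products such as $\Alt(m) \wr \cdots \wr \Alt(m)$ give finite groups with uniformly bounded composition factors and arbitrarily large nonsolvable length. Closing the argument therefore requires the quantitative statement that a sufficiently tall tower of bounded simple groups cannot satisfy $w \equiv 1$. A natural line of attack is to exploit the permutation-theoretic results of the present paper --- namely that most elements of a large finite permutation group have large support --- to show that in any deep enough tower the $w$-values cannot all collapse to the identity, contradicting $p_{w,G}(1) = 1$ past a bounded depth. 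This last step appears to demand a genuinely new idea beyond Jones's theorem and is where the heart of Conjecture \ref{conj2} lies.
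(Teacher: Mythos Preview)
The statement you are attempting to prove is labeled \emph{Conjecture} in the paper, and the paper does \emph{not} prove it; it is presented as an open problem. So there is no ``paper's own proof'' to compare against. Your write-up is honest about this: you correctly identify that the tower-of-permutation-groups step is the crux, and you explicitly say it ``appears to demand a genuinely new idea.'' That is accurate --- what you have written is an outline of a strategy that stalls at exactly the hard point, not a proof.

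It is worth noting that the paper does record one piece of progress on Conjecture~\ref{conj2} that your proposal does not mention: immediately after stating the conjecture, the authors observe that by \cite[Corollary~1.2]{KS15a} the conjecture is \emph{equivalent} to the statement that the Fitting height of a finite \emph{solvable} group satisfying $w\equiv 1$ is bounded in terms of $w$. This is a rather different reduction from yours. Your approach peels off socle layers and lands on a question about towers of permutation groups inside $\Sym(k)$; the Khukhro--Shumyatsky reduction instead transfers the whole difficulty to the solvable world, where the relevant invariant is Fitting height rather than nonsolvable length. Neither reduction is currently known to close, but they point in different directions: yours suggests attacking the problem with the support/permutation-group machinery of Section~\ref{sec3}, while the paper's remark suggests that any solution must in particular solve a purely solvable-group problem about Fitting height and identities --- a problem which is itself open.
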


See also the last paragraph of \cite{Seg18a} for a related problem, where the nonsolvable length is replaced by the
generalized Fitting height.

Conjecture \ref{conj2} is reduced to bounding the Fitting height $h(G)$ of finite solvable groups $G$ satisfying a nontrivial
identity $w$ in terms of $w$ alone; indeed this reduction follows from \cite[Corollary 1.2]{KS15a}.

In \cite{Bor17a}, the first author studied another property of nontrivial words $w$, that of being \emph{multiplicity-bounding} (or \emph{MB} for short), see \cite[Definition 1.1.1]{Bor17a}. This just means that if a finite group $G$ satisfies a probabilistic identity with respect to $w$ and $\rho\in\left(0,1\right]$, then for each nonabelian finite simple group $S$, the multiplicity of $S$ as a composition factor of $G$ is bounded from above in terms of $w$, $\rho$ and $S$. Several stronger and weaker properties than that of being MB were also studied in \cite{Bor17a}, such as the ones in the last two enumeration points of the following definition:

\begin{definition}\label{veryStronglyMBDef}
Let $w\in\F(X_1,\ldots,X_d)$ be a nontrivial word.
\begin{enumerate}
\item A \emph{variation of $w$} is a word obtained from $w$ by \enquote{splitting variables}, i.e., by adding, for each $i\in\{1,\ldots,d\}$, to each occurrence of $X_i^{\pm1}$ in $w$ some second index.
\item For a nonabelian finite simple group $S$, we say that \emph{$w$ is a coset identity over $S$} if and only if there are $\alpha_1,\ldots,\alpha_d\in\Aut(S)$ such that $w_{\Aut(S)}$ is constant on the Cartesian product $\prod_{i=1}^d{S\alpha_i}$ of cosets of $S$ in $\Aut(S)$.
\item $w$ is called \emph{weakly multiplicity-bounding} (or \emph{WMB} for short) if and only if $w$ is not a coset identity over
any nonabelian finite simple group.
\item $w$ is called \emph{very strongly multiplicity-bounding} (or \emph{VSMB} for short) if and only if every variation of $w$ is WMB.
\end{enumerate}
\end{definition}

Note that our definition of a variation slightly differs from the one in \cite[Definition 2.4(2)]{Bor17a}, which included a technical restriction on the second indices which one can assume w.l.o.g.~anyway, but we will not need it here. 

By \cite[Theorem 5.2]{LS18a}, if a finite group $G$ satisfies a probabilistic identity with respect to $w$ and $\rho$, then the orders of the nonabelian composition factors of $G$ are bounded from above in terms of $w$ and $\rho$. Letting $\Rad(G)$ denote the solvable radical of a finite group $G$ (namely the largest solvable normal subgroup of $G$), this implies the following.

\begin{corollary}\label{mbRadCor}
A nontrivial word $w$ is MB if and only if the assumption that a finite group $G$ satisfies a probabilistic identity with respect to $w$ and $\rho$ implies that the radical index $[G:\Rad(G)]$ is bounded from above in terms of $w$ and $\rho$. In particular if $w$ is MB then it is NLB.
\end{corollary}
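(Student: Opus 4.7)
The plan is to prove the biconditional via the standard bridge between $\Rad(G)$, $G/\Rad(G)$, and the nonabelian composition factors of $G$, and then deduce the \enquote{in particular} assertion as an immediate consequence. The essential external ingredient is the already-cited \cite[Theorem 5.2]{LS18a}: whenever $G$ satisfies a probabilistic identity with respect to $w$ and $\rho$, the orders of the nonabelian composition factors of $G$ are bounded in terms of $w$ and $\rho$ alone.

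For the easy direction ($\Leftarrow$), I would observe that $\Rad(G)$ is solvable, so every nonabelian composition factor of $G$ already occurs as a composition factor of $G/\Rad(G)$. Hence, for any nonabelian finite simple $S$, its multiplicity in $G$ is at most $\log_{|S|}|G/\Rad(G)|$. If $[G:\Rad(G)]$ is bounded in terms of $w$ and $\rho$, then so is this multiplicity, giving MB.

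For the harder direction ($\Rightarrow$), I would combine MB with \cite[Theorem 5.2]{LS18a}. That theorem restricts the isomorphism types of nonabelian composition factors of $G$ to a finite family depending only on $w$ and $\rho$; applying MB to each such type and summing over this finite family, the \emph{total} number of nonabelian composition factors of $G$ is bounded in terms of $w$ and $\rho$ alone. Now $G/\Rad(G)$ has trivial solvable radical, so $\Soc(G/\Rad(G))$ is a direct product of nonabelian simple groups, each a composition factor of $G$; thus it is a product of boundedly many groups of bounded order, so $|\Soc(G/\Rad(G))|$ is bounded. Since a group with trivial solvable radical embeds into the automorphism group of its socle (a standard fact, as the centralizer of the socle is forced to be trivial in this setting), $|G/\Rad(G)|$, and hence $[G:\Rad(G)]$, is bounded too.

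For the \enquote{in particular} assertion, once $w$ is MB the first part supplies a bound on $[G:\Rad(G)]$ in terms of $w$ and $\rho$; then $\lambda(G)=\lambda(G/\Rad(G))$ (because $\Rad(G)$ is solvable), together with the crude estimate $\lambda(H)\leq\log_{60}|H|$ for arbitrary finite $H$ (each nonsolvable factor has order at least $|\Alt_5|=60$), yields the desired bound on $\lambda(G)$. I do not anticipate a real obstacle; the only mildly nontrivial step is the use of \cite[Theorem 5.2]{LS18a} to eliminate the dependence on $S$ in the MB hypothesis, which reduces the rest of the argument to elementary facts about groups with trivial solvable radical.
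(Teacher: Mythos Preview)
Your proposal is correct and follows essentially the same route as the paper's proof: both directions use the same bridge between nonabelian composition factors and $G/\Rad(G)$, with \cite[Theorem 5.2]{LS18a} eliminating the $S$-dependence in the ($\Rightarrow$) direction, and the \enquote{in particular} is handled via the same $60^{\lambda(G)}\leq[G:\Rad(G)]$ estimate. If anything, your write-up is slightly more explicit than the paper's, which bounds $|T_1(G)|$ and leaves the final step (embedding $G/\Rad(G)$ into $\Aut(\Soc(G/\Rad(G)))$) implicit, relying on this fact having been stated earlier in Section~\ref{sec2}.
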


The proof of this result will be given in Subsection \ref{subsec4P2} for the reader's convenience.

Hence \cite[Theorem 1.1.2]{Bor17a} provides us with some examples of NLB words. Also by \cite[Theorem 1.1.2(1)]{Bor17a}, the shortest nontrivial words which are not MB are of the form $x^8$ where $x$ is a variable. We will, however, be able to show that such words are NLB, and the crucial observation is that while these words are not MB, in particular not VSMB, they are \enquote{almost} VSMB, in the following exact sense:

\begin{definition}\label{almostDef}
Let $w\in\F(X_1,\ldots,X_d)$ be a nontrivial word. $w$ is called \emph{almost very strongly multiplicity-bounding} (or \emph{almost VSMB} for short) if and only if every \emph{proper} variation $w'$ of $w$ (i.e., such that the number of variables occurring in $w'$ is strictly larger than the number of variables in $w$) is WMB.
\end{definition}

Our first main result relates the concepts of almost VSMB and NLB words:

\begin{theorem}\label{mainTheo1}
Almost VSMB words are NLB.
\end{theorem}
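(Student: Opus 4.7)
The plan is to argue by contradiction, adapting the variation machinery from \cite{Nik16a, Bor17a} so as to extract a \emph{proper} variation of $w$ from any hypothetical counterexample. Assume $w \in \F(X_1,\ldots,X_d)$ is almost VSMB but not NLB: then there exist $\rho > 0$ and a sequence $(G_n)_{n \geq 1}$ of finite groups with $\lambda(G_n) \to \infty$ each satisfying a probabilistic identity with respect to $w$ and $\rho$. The target is to produce, for some sufficiently large $n$, a nonabelian finite simple group $S$ and a proper variation $w'$ of $w$ such that $w'$ is a coset identity over $S$, contradicting Definition~\ref{almostDef}.

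First, by \cite[Theorem 5.2]{LS18a} the orders of the nonabelian composition factors of each $G_n$ are bounded in terms of $w$ and $\rho$, so they lie in a finite set $\Sigma$ of simple groups. The iterated socle/radical series of $G_n$ has $\lambda(G_n)$ nonsolvable layers, each a direct product of members of $\Sigma$. Passing to a subsequence and fixing a single $S \in \Sigma$, one locates in each $G_n$ a chain of chief factors $M_{n,1}, \ldots, M_{n,\ell_n}$ of the form $M_{n,j} \cong S^{r_{n,j}}$ sitting in distinct nonsolvable layers, with $\ell_n \to \infty$ and such that each upper chief factor acts nontrivially by conjugation on the one immediately below.

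Next, one exploits the probabilistic identity through this chain. Parametrising a $d$-tuple $(g_1,\ldots,g_d) \in G_n^d$ by a coset representative modulo the top of the chain together with tuples $(t_{i,j})$ with $t_{i,j} \in M_{n,j}$, the value $w_{G_n}(g_1,\ldots,g_d)$, reduced modulo the successive factors of the chain, is level-by-level a product whose $j$-th component is a word in the $t_{i,j}$, with each occurrence of $X_i$ in $w$ contributing its own $t_{i,j}$-factor conjugated by an element of $\Aut(M_{n,j})$ determined by the higher-level data and by the fixed coset representative. As $\ell_n \to \infty$, a pigeonhole/averaging argument on the conjugating automorphisms, combined with the nontriviality of the inter-layer actions, forces that in some chief factor $M_{n,j_0}$ at least two occurrences of some variable $X_i$ of $w$ end up acting on distinct direct $S$-factors of $S^{r_{n,j_0}}$. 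Restricting attention to such a direct factor and averaging over the remaining data, the hypothesis $p_{w,G_n}(g_n) \geq \rho$ yields a coset identity over $S$ of positive fiber proportion whose underlying word is a proper variation $w'$ of $w$, contradicting almost VSMB.

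The main obstacle is this last step: quantitatively showing that it is the \emph{depth} $\ell_n$ that forces a genuinely proper variation of $w$ to appear, as opposed to a large multiplicity $r_{n,j}$ within a single layer—which is the parameter exploited in the VSMB-implies-MB direction of \cite{Bor17a} and which cannot be bounded here, since MB itself fails for words like $x^8$ that the theorem is meant to cover. Controlling the orbit structure of the iterated conjugation actions across $\ell_n$ nested layers and preserving a uniform lower bound on fiber proportions throughout the combinatorial extraction are the technically delicate points.
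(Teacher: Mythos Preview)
Your approach diverges from the paper's, and the gap you yourself flag in the final paragraph is real and, as stated, fatal to the argument.

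The paper does \emph{not} try to push the probabilistic identity down through many nonsolvable layers and extract a single coset identity contradiction from depth. Instead it introduces an intermediate notion, \emph{permutation-part-bounding} (PPB), and proves the chain Almost VSMB $\Rightarrow$ PPB $\Rightarrow$ NLB. The second implication is soft: one checks that $H_2(G)$ is a section of the product of the permutation parts $P(H_{1,i}(G))$, so bounding these (together with the bound on simple factors from \cite{LS18a}) bounds $|H_2(G)|$ and hence $\lambda(G)$. All the work is in the first implication, and it takes place entirely inside the \emph{first} layer $H_1(G)$.

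Here is the mechanism you are missing. Fix one block $H$ with $\Soc(H)=S^n$ and permutation part $P\leq\Sym(n)$. For a $d$-tuple $(\sigma_1,\ldots,\sigma_d)\in P^d$, the associated system of coset word equations over $S$ (as in \cite[Lemma~2.7]{Bor17a}) has, in each coordinate $t$, an underlying word that is a variation of $w$; this variation is \emph{proper} precisely at those $t$ where a certain segment word $v=\Delta_{i,j}(w)$ evaluated at $(\sigma_1,\ldots,\sigma_d)$ moves $t$. So proper variations arise not from depth but from the permutation $v(\sigma_1,\ldots,\sigma_d)$ having large support. The almost VSMB hypothesis then makes each such coordinate equation non-universally-solvable, and a counting argument gives
\[
\rho\leq p_{w,H}(h)\leq \tfrac{\rho}{2}+\tfrac{1}{|P|}\bigl|\{\sigma\in P:\supp(\sigma)\leq C(\rho)\}\bigr|.
\]
The crux is a new permutation-group result (Theorem~\ref{permGroupTheo} in the paper): if a finite permutation group $P$ has at least a $\rho/2$ proportion of elements with support $\leq C$, then $|P|$ is bounded in terms of $C$ and $\rho$. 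This is what closes the loop and bounds $|P|$.

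Your plan, by contrast, never isolates the permutation part and has no analogue of this support-counting lemma; the ``pigeonhole/averaging on conjugating automorphisms across $\ell_n$ layers'' is exactly the step that cannot be made to work without it, as you note. The correct route is to stay in one layer and invoke the permutation-group theorem.
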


Thus almost VSMB words satisfy Conjecture \ref{conj1}.
This theorem is proved using a result of independent interest, showing that \emph{if $P$ is a finite permutation
group, and the proportion of elements $\sigma \in P$ whose support has size at most $C$ is at least $\rho > 0$,
then $|P|$ is bounded above in terms of $C$ and $\rho$}. See Theorem \ref{permGroupTheo} below, as well as
Theorem \ref{supp} and Proposition \ref{betterbound} for related results on permutation groups and the 
support of their elements.

Using the above result, Corollary \ref{mbRadCor} and \cite[Theorem 1.1.2(3)]{Bor17a}, the following is immediate:

\begin{corollary}\label{mainCor1}
Let $w\in\F(X_1,\ldots,X_d)$ be a nontrivial word of length at most $8$. Then $w$ is NLB.
\end{corollary}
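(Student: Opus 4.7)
The plan is a short case analysis using the characterization of short non-MB words from \cite{Bor17a}. By \cite[Theorem 1.1.2(1)]{Bor17a}, the shortest nontrivial words that fail to be MB are precisely those of the form $x^{\pm 8}$ for a single variable $x$. Consequently, any nontrivial word $w$ of length at most $8$ is either MB, or equals $x^{\pm 8}$ for some variable $x$. I would handle these two alternatives separately.

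In the first case, when $w$ is MB, Corollary \ref{mbRadCor} directly gives that $w$ is NLB, since that corollary records the implication that MB words are NLB.

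In the second case, when $w = x^{\pm 8}$, the strategy is to verify the hypothesis of Theorem \ref{mainTheo1}, i.e., to show that $w$ is almost VSMB, and then invoke that theorem. A proper variation of $x^{\pm 8}$ is a length-$8$ monomial of the form $x_{i_1}^{\pm 1}\cdots x_{i_8}^{\pm 1}$ in which the signs all agree with the sign of $\pm 8$ and the index sequence $(i_1,\ldots,i_8)$ uses at least two distinct indices. By \cite[Theorem 1.1.2(3)]{Bor17a}, every such proper variation is WMB, and hence $x^{\pm 8}$ is almost VSMB. Theorem \ref{mainTheo1} then delivers the conclusion that $w$ is NLB.

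Since the substantial content sits in Theorem \ref{mainTheo1} (which in turn rests on the permutation-group results announced in the introduction) and in the classification \cite[Theorem 1.1.2]{Bor17a}, no genuinely new obstacle arises here; the argument is essentially a two-line routing of these inputs. The only point that merits a small sanity check is the observation that for $w = x^{\pm 8}$ every \emph{proper} variation (rather than just every variation) suffices for the hypothesis of Theorem \ref{mainTheo1}, which is exactly how the notion \enquote{almost VSMB} was set up in Definition \ref{almostDef} to accommodate precisely the power-word obstruction.
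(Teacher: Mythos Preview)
Your argument is correct and uses the same two inputs as the paper: \cite[Theorem 1.1.2(3)]{Bor17a} and Theorem \ref{mainTheo1}. The only difference is organizational. The paper's proof is a single line: by \cite[Theorem 1.1.2(3)]{Bor17a}, \emph{every} nontrivial word of length at most $8$ is almost VSMB, so Theorem \ref{mainTheo1} applies directly, with no case distinction needed. Your detour through Corollary \ref{mbRadCor} for the MB case is valid but redundant, since those words are already covered by the uniform almost-VSMB statement; the case split buys nothing and the paper simply omits it.
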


Theorem \ref{mainTheo1} and Corollary \ref{mainCor1} provide evidence in favor of Larsen's conjecture mentioned above. We note that while $X_1^{12}$ is also not MB, the authors cannot exclude the possibility that all words of lengths $9$, $10$ and $11$ are VSMB, in particular NLB, thus possibly allowing
to replace the constant $8$ in Corollary \ref{mainCor1} by $11$. However, compared to studying words of lengths up to $8$ as done by the first author in \cite[Section 6]{Bor17a}, the computational cost of doing so even just for words of length $9$ is considerable and would most likely require a medium- to large-scale parallel computation. Still, with some more theoretical machinery, we will at least be able to show the following:

\begin{corollary}\label{mainCor2}
Let $w\in\F(X_1,\ldots,X_d)$ be a nontrivial word of length at most $11$. Then there is a constant $L_w\in\IN$ such that if a finite group $G$ satisfies the identity $w\equiv1$, then $\lambda(G)\leq L_w$.
\end{corollary}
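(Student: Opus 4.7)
The plan is to combine Theorem \ref{mainTheo1} with a case analysis of the few words of length at most $11$ that fail to be almost VSMB, and then to invoke Segal's theorem for the exceptional (essentially power-word) cases.

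First, I would show that if $w$ is almost VSMB, then Theorem \ref{mainTheo1} applies directly, giving $\lambda(G)\leq f_w(1)$ for any $G$ satisfying the identity $w\equiv 1$ (the stronger probabilistic statement at $\rho=1$), so we set $L_w:=f_w(1)$. Thus the real work lies in handling words $w$ of length at most $11$ that are \emph{not} almost VSMB, i.e.\ those admitting a proper variation $w'$ which is a coset identity over some nonabelian finite simple group $S$.

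Next, I would classify the non-almost-VSMB words of length at most $11$ by extending the analysis of \cite[Theorem~1.1.2]{Bor17a}. Rather than redo the full exhaustive computation of \cite[Section~6]{Bor17a} at lengths $9$, $10$, $11$ (which the paper notes is computationally prohibitive), I would use theoretical constraints on coset identities: the coset identity $w'_{\Aut(S)}\equiv c$ on a product of cosets of $S$ in $\Aut(S)$ forces $|S|$ to be bounded in terms of the length of $w'$, and the fact that $w'$ is a proper variation of a short $w$ sharply limits the variable-splitting patterns that can arise. Together with the classification of coset identities on small almost-simple groups already exploited in \cite{Bor17a}, this should reduce the list of exceptional $w$ of length $\leq 11$ to (up to cyclic conjugation, inversion and variable relabeling) a short list, dominated by the pure power words $X_1^n$ with $n\in\{8,9,10,11\}$.

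For each exceptional $w$ on this list, I would verify Corollary \ref{mainCor2} directly. For $w=X_1^n$, the hypothesis $w\equiv 1$ on $G$ means $G$ has finite exponent dividing $n$, so by Segal's theorem \cite[Theorem~10]{Seg18a} the generalized Fitting height $h^\ast(G)$ is bounded by a function of $n$; since $\lambda(G)\leq h^\ast(G)$, the desired bound follows. For any non-power exception that survives Step~2, I would apply the Khukhro--Shumyatsky reduction from \cite[Corollary 1.2]{KS15a} (already invoked after Conjecture~\ref{conj2}) to pass from bounding $\lambda(G)$ to bounding the Fitting height of solvable groups satisfying $w\equiv 1$, and then appeal to Hall--Higman-type machinery for the specific identity at hand.

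The main obstacle will be Step~2: the classification of non-almost-VSMB words of length $\leq 11$ without a full exhaustive search. Specifically, one must rule out, by a theoretical argument on coset identities over nonabelian finite simple groups, the possibility of \enquote{hidden} exceptional words that are neither (essentially) power words nor amenable to Segal's theorem. If such a word did appear, handling it would require an ad hoc Fitting-height bound, which is precisely the obstacle Conjecture~\ref{conj2} predicts one can always overcome but which is only verified in a few cases in the literature.
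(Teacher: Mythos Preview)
Your plan has a genuine gap, and it diverges from the paper's actual argument in a way that matters. Your Step~2 is the crux, and you yourself flag it as the obstacle: you offer no proof that the non-almost-VSMB words of length $\leq 11$ are essentially power words, only a hope that \enquote{theoretical constraints on coset identities} will cut the list down. Worse, your Step~3 fallback for any surviving non-power exception---Khukhro--Shumyatsky reduction followed by a Fitting-height bound for solvable groups satisfying $w\equiv 1$---is an appeal to precisely the open problem underlying Conjecture~\ref{conj2}, not a proof. So as written the proposal is circular for any exceptional word you cannot dispatch via Segal.

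The paper avoids this classification entirely by introducing new machinery: Theorem~\ref{mainTheo2} and Proposition~\ref{segmentProp}. The idea is that if one can find indices $i<j$ with $x_i=x_j$ such that every $(i,j)$-split variation of $w$ is WMB, then any finite semisimple $H$ satisfying $w\equiv 1$ has its permutation part $P(H)$ satisfying the \emph{strictly shorter} identity $\Delta_{i,j}(w)\equiv 1$. For lengths $9$, $10$, $11$ (the single-variable power case being trivial, since no nonabelian finite simple group has exponent $\leq 11$), an elementary pigeonhole on variable multiplicities---e.g.\ $9<3\cdot 4$ forces some variable to occur at most three times, and in the two-variable case $11<2\cdot 6$ forces one variable to occur at most five times---guarantees a suitable pair $(i,j)$ whose split variations are WMB via \cite[Proposition~3.1]{Bor17a}. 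One then applies Corollary~\ref{mainCor1} (or the already-handled shorter-length cases) to $P(H_1(G))$ to bound $\lambda(P(H_1(G)))$, and hence $\lambda(G)$. No enumeration of exceptional words is needed, and no Fitting-height bound for general identities is invoked.
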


Thus words of length at most $11$ satisfy Conjecture \ref{conj2}.
The proof of Corollary \ref{mainCor2} is based on a result allowing one to infer, under certain assumptions on a nontrivial word $w$, that if a finite group $H$ without nontrivial solvable normal subgroups satisfies the identity $w\equiv1$, then the so-called \emph{permutation part of $H$} (see Definition \ref{permDef}(1) below) satisfies a shorter identity. This result is formulated in detail in Subsection \ref{subsec5P1} as Theorem \ref{mainTheo2}.

Apart from new techniques for relating (probabilistic) identities with the nonsolvable length, we will also give infinitely many new (i.e., not already implicit in \cite[Theorem 1.1.2(1)]{Bor17a}) examples of both MB and non MB words. Recall that the power words $x^e$ are MB for all odd $e$ (as shown in the above reference). Answering \cite[Question 7.1]{Bor17a} we show the following.

\begin{theorem}\label{newMBTheo}
Let $x$ be a variable. Then the following hold:
\begin{enumerate}
\item Let $m$ be a positive integer such that every prime divisor $l$ of $m$ satisfies $l\equiv1\Mod{225}$. Then $x^{2m}$ is MB.
\item Let $e$ be a positive integer with $e>4$ and $4\mid e$. Then $x^e$ is not MB.
\end{enumerate}
\end{theorem}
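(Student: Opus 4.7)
The two parts require rather different techniques. For (1), the plan is to invoke Corollary \ref{mbRadCor} and reduce to excluding coset identities of $x^{2m}$ over nonabelian finite simple groups, which is done by CFSG case analysis using the arithmetic of $225$. For (2), the plan is to construct, for each admissible $e$, an explicit sequence of groups $G_n$ with unbounded multiplicity of some nonabelian finite simple composition factor but with the probabilistic identity holding with $\rho=1/2$, built from a coset identity of $x^e$ over a suitable pair $(S,\alpha)$.

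For part (1), after reducing via Corollary \ref{mbRadCor} to showing $[G:\Rad(G)]$ is bounded, I would apply \cite[Theorem 5.2]{LS18a} to bound the orders of the nonabelian composition factors of $G$ in terms of $m$ and $\rho$, leaving only finitely many simple groups $S$ to control. The key step is then to show that for every nonabelian finite simple group $S$ and every $\alpha\in\Aut(S)$, the coset $S\alpha$ contains an element whose order does not divide $2m$; once this is done, the framework of \cite{Bor17a} converts it into the desired bound on the multiplicity of $S$ as a composition factor. The hypothesis on $m$ enters here: the prime divisors of $2m$ are $2$ together with primes $l\equiv 1\Mod{225}$, which in particular are strictly greater than $5$, so it is enough to exhibit in each coset $S\alpha$ an element whose order is divisible by $3$ or $5$. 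Along the CFSG I would handle the alternating groups via cycle structure, the Lie-type families using Zsigmondy-type arguments and the classification of element orders in outer automorphism cosets (field, graph, graph-field, diagonal types), and the sporadic groups by inspection; the choice $225=9\cdot 25$ is designed precisely to dodge the finitely many small-field exceptional configurations where a $\{3,5\}$-singular element might fail to appear in $S\alpha$.

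For part (2), given $e>4$ with $4\mid e$, I would construct a nonabelian finite simple group $S$ and an involution $\alpha\in\Aut(S)\setminus\Inn(S)$ such that $s\alpha(s)$ has order dividing $e/2$ for every $s\in S$ (so that $(s\alpha)^e=1$ in the almost simple group $S\rtimes\langle\alpha\rangle$). Setting $G_n:=S^n\rtimes\langle\alpha\rangle$ with $\alpha$ acting diagonally, the semidirect product computation
\[
\bigl((s_1,\ldots,s_n),\alpha\bigr)^e=\bigl((s_1\alpha(s_1))^{e/2},\ldots,(s_n\alpha(s_n))^{e/2},\,\alpha^e\bigr)=1
\]
shows that the fiber of $1$ under $x^e_{G_n}$ contains the entire coset $S^n\alpha$, giving $p_{x^e,G_n}(1)\geq|S|^n/(2|S|^n)=1/2$ for every $n$. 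Since $G_n$ has $n$ copies of $S$ as composition factors, this contradicts MB. The required pair $(S,\alpha)$ can be produced from classical groups over quadratic field extensions, for example by taking $S=\PSL_2(q)$ with $q=p^2$ for an appropriate prime power, $\alpha$ the field involution, and tuning $p$ to the given $e$ so that the orders of the elements $s\alpha(s)\in S$, which lie in the fixed subgroup $\PSL_2(p)$, all divide $e/2$.

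The main obstacle in (1) is the uniform CFSG case analysis for classical Lie-type groups, where element orders in outer automorphism cosets depend intricately on the isogeny type, the defining field, and the type of outer automorphism, and where one must verify that in every such coset a $\{3,5\}$-singular element exists. The main obstacle in (2) is the explicit construction of a suitable pair $(S,\alpha)$ uniformly in $e$; while the case $e=8$ is in effect the counterexample already alluded to in \cite[Theorem 1.1.2(1)]{Bor17a}, the general $e$ with $4\mid e$ and $e>4$ requires a careful arithmetic choice of the defining field so that the coset-identity condition on the orders of $s\alpha(s)$ holds uniformly in $s\in S$.
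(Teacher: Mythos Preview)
Both parts contain genuine gaps in the specific constructions you propose, even though the overall reductions (to excluding, respectively exhibiting, a coset identity) are the right ones.

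For part~(2), restricting to an \emph{involution} $\alpha$ is too strong. Take $e=20$. Your $\PSL_2(p^2)$ construction with the field involution gives $(s\alpha)^2 = s\,\alpha(s)$, which is only \emph{conjugate} to an element of $\PSL_2(p)$ (it does not literally lie there), so you would need $\exp(\PSL_2(p))\mid 10$; but $3\mid\exp(\PSL_2(p))$ for $p\in\{2,3,5\}$ and $p\mid\exp(\PSL_2(p))$ for all $p$, so no prime works. The paper instead handles $e=4f$ with $f>1$ odd via $S=\PSL_2(3^f)$ and $\alpha$ a product of a nontrivial diagonal automorphism and a field automorphism of order $f$; here $\ord(\alpha)=2f\neq 2$, and a separate computation (Lemma~\ref{psl23fLem}) shows $\exp(S\alpha)=4f=e$, giving the coset identity. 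The case $8\mid e$ is reduced to $e=8$.

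For part~(1), your proposed tool---finding in every coset $S\alpha$ an element of order divisible by $3$ or $5$---fails in exactly the situation above: for $S=\PSL_2(3^f)$ with $f$ odd and coprime to $15$ and $\alpha$ as in Lemma~\ref{psl23fLem}, one has $\exp(S\alpha)=4f$, which is coprime to $15$, so no such element exists. The paper handles this coset by producing an element of order divisible by $4$ and using $4\nmid 2m$. This also shows your reading of $225$ is off: the condition $l\equiv 1\Mod{225}$ is not there to guarantee $\{3,5\}$-singular elements, but to force a contradiction in the arithmetic of $(p^{\gcd(f,t)}-1)/2$ (the order of a specific diagonal element centralised by $\alpha$) via $2n+1\equiv 3\Mod 9$ and $4n+1\equiv 5\Mod{25}$. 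Finally, the paper avoids your full CFSG sweep by invoking \cite[Theorem~5.1]{Bor17a}, which reduces the coset-identity check for power words to $\PSL_2(p^f)$ and $\Suz(2^{2k+1})$ only.
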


Obtaining a better understanding for which positive integers $e$ the word $x^e$ is (or is not) MB is of intrinsic interest, but it also relates to bounding $\lambda(G)$ in terms of the group exponent $\exp(G)$, see Subsection \ref{subsec6P1}. We note that Theorem \ref{newMBTheo}(2) partially contradicts the first author's result \cite[Theorem 1.1.2(1)]{Bor17a}; more precisely, \cite[Theorem 1.1.2(1)]{Bor17a} wrongly states that $x^{20}$ is MB, but it is not. However, as clarified in an erratum on \cite{Bor17a} prepared by the first author, \cite[Theorem 1.1.2(1)]{Bor17a} does become true if one replaces the set $\{8,12,16,18\}$ in its statement by $\{8,12,16,18,20\}$ (so $20$ is the only exponent $e$ for which the original version of \cite[Theorem 1.1.2(1)]{Bor17a} makes a wrong statement on the MB property status of $x^e$). Except for the paragraph at hand, whenever we cite \cite[Theorem 1.1.2(1)]{Bor17a} in our paper (as we already did above), we are actually always referring to the above mentioned corrected version of it.

This paper is organized as follows. In Section \ref{sec2} we introduce some notation.
Section \ref{sec3} is devoted to permutation groups and the supports of its elements.
We obtain there results of independent interest, some of which are applied in subsequent sections.
In Section \ref{sec4} we study probabilistic identities and prove Theorem \ref{mainTheo1} and Corollary
\ref{mainCor1}. Section \ref{sec5} is devoted to identities and the proof of Corollary \ref{mainCor2}. Finally, in Section \ref{sec6}, we prove Theorem \ref{newMBTheo} as well as a few other results on the impact of power word identities on the group structure.
In particular we show there that the nonsolvable length of a finite group is bounded above by the exponent of its
Sylow $2$-subgroups.

\section{Some notation and prerequisites}\label{sec2}

We first discuss an equivalent, but more explicit (though also more technical) definition of $\lambda(G)$.

\begin{definition}\label{lambdaDef}
Let $G$ be a finite group.
\begin{enumerate}
\item We denote by $\Rad(G)$ the \emph{solvable radical of $G$}, the largest solvable normal subgroup of $G$.
\item We denote by $\Soc(G)$ the \emph{socle of $G$}, the subgroup of $G$ generated by all the minimal normal subgroups of $G$.
\item We define sequences $(G_k(G))_{k\geq1}$, $(R_k(G))_{k\geq1}$, $(H_k(G))_{k\geq1}$ and $(T_k(G))_{k\geq1}$ of characteristic sections of $G$ recursively as follows:
\begin{enumerate}
\item $G_1(G):=G$.
\item For $k\geq1$, $R_k(G):=\Rad(G_k(G))$.
\item For $k\geq1$, $H_k(G):=G_k(G)/R_k(G)$.
\item For $k\geq1$, $T_k(G):=\Soc(H_k(G))$.
\item For $k\geq2$, $G_k(G):=H_{k-1}(G)/T_{k-1}(G)$.
\end{enumerate}
\end{enumerate}
\end{definition}

We call a finite group $H$ \emph{semisimple} if and only if $\Rad(H)$ is trivial, i.e., if and only if $H$ has no nontrivial solvable normal subgroups. For the basic structure theory of finite semisimple groups (from which several of the subsequently listed facts follow), see \cite[pp.~89ff.]{Rob96a}.

For every finite group $G$, the groups $R_k(G)$ are by definition all solvable, the groups $H_k(G)$ are semisimple, and the groups $T_k(G)$ are direct products of nonabelian finite simple groups. Moreover, since $H_k(G)$ embeds into the automorphism group of $T_k(G)$, we have that $T_k(G)$ is trivial if and only if $H_k(G)$ is trivial, so there is a unique non-negative integer $\lambda'(G)$ such that $T_1(G),\ldots,T_{\lambda'(G)}(G)$ are all nontrivial and $T_k(G)=\{1\}$ for $k>\lambda'(G)$. Actually, $\lambda'(G)=\lambda(G)$, by \cite[first paragraph in the Introduction]{KS15a}.

We now introduce some more notation and terminology that will be used throughout the paper. We denote by $\IN$ the set of natural numbers (including $0$) and by $\IN^+$ the set of positive integers. When $f:X\rightarrow Y$ is a function and $M\subseteq X$, then $f_{\mid M}$ denotes the restriction of $f$ to $M$, and $f[M]$ denotes the element-wise image of $M$ under $f$. Euler's constant will be denoted by $\e$ (which is to be distinguished from the variable $e$). For $c>1$, we denote by $\log_c$ the base $c$ logarithm, and $\log$ denotes $\log_{\e}$. For a set $\Omega$, the symmetric group on $\Omega$ is denoted by $\Sym(\Omega)$, and for $n\in\IN^+$, $\Sym(n)$ denotes the symmetric group on $\{1,\ldots,n\}$. The group of units of a field $K$ is denoted by $K^{\ast}$, and the algebraic closure of $K$ by $\overline{K}$. For a prime power $q$, the finite field with $q$ elements is denoted by $\IF_q$. For a subset $M$ of a finite group $G$, we denote by $\exp(M)$ the least common multiple of the orders of the elements of $M$. Finally, for a nonabelian finite simple group $S$ and a word $w\in\F(X_1,\ldots,X_d)$, a \emph{coset word equation with respect to $w$ over $S$} is an equation of the form $w(s_1\alpha_1,\ldots,s_d\alpha_d)=\beta$ where $\alpha_1,\ldots,\alpha_d,\beta$ are fixed automorphisms of $S$, and $s_1,\ldots,s_d$ are variables ranging over $S$ (so that the solution set of such an equation is always a subset of $S^d$).

\section{Permutation groups}\label{sec3}

Some of our proofs require the study of the support of permutations
and its distributions in finite permutation groups. In this section we
obtain results in this direction, which may be of some independent interest.

For a permutation group $P\leq\Sym(\Omega)$ and $\sigma \in P$ we let $\supp(\sigma)$
denote the number of points moved by $\sigma$ and $\supp(P)$ the number of points
moved by some element of $P$. We also let $\fix(\sigma)$ denote the number of fixed
points of $\sigma$, and $\deg(P):=|\Omega|$.

\begin{theorem}\label{supp} Let $P\leq\Sym(\Omega)$ be a permutation group (where $P$ and
$\Omega$ are not assumed to be finite).
Let $c$ be a positive integer, and suppose $\supp(\sigma) \le c$ for all $\sigma \in P$.
Then
\begin{enumerate}
\item$|P| \le c!$;
\item$\supp(P) \le 2(c-1)$ if $c>1$.
\end{enumerate}
\end{theorem}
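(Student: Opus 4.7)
The plan is to prove (2) directly via a Burnside-type count and then address (1) through a structural analysis of the action of $P$ on the support of a maximum-support element.

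For (2), I would first reduce to the case of finite $P$: if $\supp(P)>2(c-1)$, pick finitely many elements of $P$ whose supports together cover more than $2(c-1)$ points, and pass to the subgroup $Q$ they generate. Every element of $Q$ has its support contained in the finite union of the chosen generators' supports, so $Q$ acts faithfully on this finite set and is itself finite, with maximum support still at most $c$. Hence one may assume $P$ and $\Omega:=\supp(P)$ are finite with $P\leq\Sym(\Omega)$ acting faithfully. The Cauchy--Frobenius lemma gives $\sum_{\sigma\in P}\fix(\sigma)=|P|\cdot r$, where $r$ is the number of $P$-orbits on $\Omega$. Substituting $\fix(\sigma)=|\Omega|-\supp(\sigma)$ yields $\sum_{\sigma\in P}\supp(\sigma)=|P|(|\Omega|-r)$. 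Since only the identity has support zero and every other element has support at most $c$, the left-hand side is at most $(|P|-1)c$, so $|P|(|\Omega|-r)\leq(|P|-1)c$ and hence $|\Omega|-r\leq c-1$. Combined with $r\leq|\Omega|/2$ (each orbit in $\supp(P)$ has size at least $2$), this gives $|\Omega|\leq 2(c-1)$.

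For (1), let $\sigma\in P$ have maximum support $S=\Supp(\sigma)$, $|S|=k\leq c$. First observe that the pointwise stabilizer $P_{(S)}:=\{\rho\in P:\rho|_S=\id\}$ is trivial: for $\rho\in P_{(S)}$, the product $\sigma\rho$ has support $S\sqcup\Supp(\rho)$ (a disjoint union, since $\rho$ fixes $S$ pointwise and $\sigma$ fixes $\Omega\setminus S$), so maximality of $|S|$ forces $\Supp(\rho)=\emptyset$. Hence the setwise stabilizer $\Stab_P(S)$ acts faithfully on $S$, embedding into $\Sym(S)$, so $|\Stab_P(S)|\leq k!\leq c!$. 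Writing $X:=\{\rho[S]:\rho\in P\}$ for the $P$-orbit of $S$ under the setwise action on $k$-subsets, $P$ acts transitively on $X$ with stabilizer $\Stab_P(S)$, so $|P|=|X|\cdot|\Stab_P(S)|$. A useful combinatorial input is the \emph{support triangle inequality} $\supp(\alpha\beta)\geq|\Supp(\alpha)\triangle\Supp(\beta)|$, which together with maximality of $k$ forces $|T_1\cap T_2|\geq k/2$ for any two $T_1,T_2\in X$.

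The main obstacle will be the last step of (1): converting the intersection constraint $|T_1\cap T_2|\geq k/2$ on the transitive $P$-set $X$ into the sharp bound $|X|\cdot|\Stab_P(S)|\leq c!$. The example of $P\cong\Sym(3)$ generated in $\Sym(5)$ by $(1,2)(3,4)$ and $(1,5)(3,4)$ shows that $P$ need not setwise stabilize $S$ (so $|X|>1$ is possible), and hence one cannot simply deduce $P=\Stab_P(S)$. The sharpening to $|P|\leq c!$ must come from a more delicate combination of the transitive $P$-action on $X$ with the intersection-constraint on its elements (for instance by showing $|X|\leq k$ and using the faithfulness of $P$ on $\Omega$ to embed $P$ into a symmetric group of degree at most $c$). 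Part (2), by contrast, follows cleanly from the Cauchy--Frobenius count described above.
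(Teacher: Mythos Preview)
Your argument for part (2) is correct and is essentially the paper's own proof: both use the Cauchy--Frobenius count to get $|\Omega|-r\le c-1$ (the paper phrases this as $\sum_i(n_i-1)\le c-1$ for the orbit sizes $n_i$), and then the observation that each orbit has size at least $2$ finishes it. Your reduction to the finite case is also fine.

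The genuine gap is in part (1), as you yourself flag. Your maximum-support-element approach does not seem to close: even granting the pairwise intersection bound $|T_1\cap T_2|\ge k/2$ on the orbit $X$, there is no evident way to squeeze out $|X|\cdot|\Stab_P(S)|\le c!$. The paper avoids this entirely by staying with the orbit decomposition you already used for (2). With $n_1,\ldots,n_t$ the orbit sizes on $\supp(P)$, one has the crude bound $|P|\le\prod_i n_i!$, and the key elementary inequality (proved by a one-line induction on $t$) is
\[
\prod_{i=1}^t n_i!\;\le\;\Bigl(1+\sum_{i=1}^t(n_i-1)\Bigr)!.
\]
Since you already know $\sum_i(n_i-1)\le c-1$ from the Burnside count, this gives $|P|\le c!$ immediately. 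So the missing idea is not a sharper analysis of your set system $X$, but rather to discard the maximum-support element altogether and feed the same quantity $\sum_i(n_i-1)$ that gave you (2) into the product bound $\prod n_i!$.
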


We note that the bound in part (1) is best possible for all $c$ (take $P = \Sym(c)$ acting on $\Omega = \{1, \ldots , c \})$.

The bound in part (2) is also best possible at least when $c=2^k$ for some $k\in\IN^+$, by the following example: let $H_k\leq\IF_2^{2^k}$ be the $[2^k,k,2^{k-1}]_2$-Hadamard code (see e.g.~\cite[p. 248]{V96}). By its definition, it is clear that there is a unique coordinate where all elements of $H_k$ are $0$; we project $H_k$ onto the $2^k-1$ other coordinates, resulting in a subspace $\tilde{H}_k\leq\IF_2^{2^k-1}$ (which we regard as an
additive group) with the following properties:
\begin{itemize}
\item every nonzero element of $\tilde{H}_k$ has exactly $2^{k-1}$ nonzero entries (equal to $1$);
\item for each $i\in\{1,\ldots,2^k-1\}$, there is an element of $\tilde{H}_k$ having entry $1$ in the $i$-th coordinate.
\end{itemize}
Set $\Omega:=\{1,\ldots,2^k-1\}\times\{0,1\}$. Consider the function $f:\tilde{H}_k\rightarrow\Sym(\Omega)$ where $f(x_1,\ldots,x_{2^k-1})$ is the product of the transpositions $((i,0),(i,1))$ for those $i\in\{1,\ldots,2^k-1\}$ where $x_i=1$. Then $f$ is an injective group homomorphism, so
the image $P:=f[\tilde{H}_k]$ is actually a subgroup of $\Sym(\Omega)$, and it satisfies $\supp(P)=|\Omega|=2(2^k-1)$ and that all nontrivial elements of $P$ have support size exactly $2\cdot 2^{k-1}=2^k$.

We now prove Theorem \ref{supp}.

\begin{proof}
We first assume $\Omega$ is finite, and then deduce the result without this assumption.

Set $n = |\Omega|$.
We may assume $P$ has no orbits of size $1$ in its action on $\Omega$, since we may delete these orbits
from $\Omega$, thereby obtaining a subset $\Omega' = \supp(P)$, and regard $P$ as a permutation group on $\Omega'$.

Suppose $P$ has $t$ orbits on $\Omega$, of sizes $n_1, \ldots , n_t$.
Then
\[
|P| \le n_1! \cdots n_t!.
\]
Since $\supp(\sigma) \le c$ for all $\sigma \in P$, we have $\fix(\sigma) \ge n-c$ for all $\sigma \in P$.
Consider the random variable $X=\fix(\sigma)$, where $\sigma\in P$ is assumed to be chosen uniformly at random. Then, by the
Cauchy-Frobenius Lemma (\enquote{The Lemma that is not Burnside's}), $\E(X) = t$. This yields $t \ge n-c$. In fact, since $\fix(1) = n$ we have $t > n-c$, hence
\[
\sum_{i=1}^t (n_i-1) = n-t \le c-1.
\]
Since $n_i \ge 2$ we have $n_i \le 2(n_i-1)$, and so
\[
\supp(P) = \sum_{i=1}^t n_i \le 2 \sum_{i=1}^t (n_i-1) \le 2(c-1).
\]
This proves part (2).

To prove part (1) we claim that
\[
\prod_{i=1}^t n_i! \le (1+ \sum_{i=1}^t (n_i-1))!.
\]
We prove the claim by induction on $t$, the case $t=1$ being trivial.

Assuming the claim for $t-1$ we have
\[
\prod_{i=1}^{t-1} n_i! \le (1+ \sum_{i=1}^{t-1} (n_i-1))!.
\]
Set $d = 1+ \sum_{i=1}^{t-1} (n_i-1)$. Then,
since $d \ge 1$ we have
\[
n_t! \le (d+1)(d+2) \cdots (d+n_t-1).
\]
Hence
\[
\prod_{i=1}^t n_i! \le d! n_t! \le d!(d+1)(d+2) \cdots (d+n_t-1) = (d+n_t-1)! = (1+ \sum_{i=1}^{t} (n_i-1))!,
\]
proving the claim. We conclude that
\[
|G| \le \prod_{i=1}^t n_i! \le
(1+ \sum_{i=1}^t (n_i-1))! \le c!,
\]
proving part (1).

Suppose now $\Omega$ is infinite. Let $\Omega'$ be the support of $P$, as above. We claim that $\Omega'$ is finite,
hence, regarding $P$ as a permutation group on $\Omega'$, we reduce to the finite case.

To prove the claim, choose $\sigma_1 \in P$ and denote its support by $B_1$. If $B_1 = \Omega'$ then $\Omega'$ has
size at most $c$ and we are done. Otherwise there exists $\sigma_2 \in P$ with support $B_2$ which is not contained
in $B_1$. If $B_1 \cup B_2 = \Omega'$ we are done. Otherwise we proceed so that in step $i$ we choose $\sigma_i \in P$
with support $B_i$ which is not contained in $\Omega_{i-1} := \cup_{j=1}^{i-1} B_j$. Let $P_i \le P$ be the subgroup generated by
$\sigma_1, \ldots , \sigma_i$ and let $\Omega_i = \cup_{j=1}^i B_j$. Then $\Omega_i$ is finite (of size at most $ci$) and
$P_i \le \Sym(\Omega_i)$. By the finite case we have $|\Omega_i| = \supp(P_i) \le 2(c-1)$. Since the sequence $|\Omega_j|$
is increasing the process must stop, which means that, for some $i$, $\Omega' = \Omega_i$ is finite. This completes the proof.

\end{proof}

We now prove a result on permutation groups that will be used later.

Let $C\in\IN$, and let $P\leq\Sym(\Omega)$ be a permutation group. We denote by $\SB_C(P)$ the set of all $\sigma\in P$ whose support on $\Omega$ is of size at most $C$.

\begin{theorem}\label{permGroupTheo}
There is a function $f:\left(0,1\right]\times\IN\rightarrow\left[1,\infty\right)$ such that the following holds: Let $\rho\in\left(0,1\right]$, $C\in\IN$, and assume that $P\leq\Sym(\Omega)$ is a permutation group of finite degree such that $|\SB_C(P)|\geq\rho|P|$. Then
\[
|P|\leq f(\rho,C).
\]
Indeed, one may choose $f$ to be the following function:
\[
f(\rho,C):=(\lfloor\rho^{-1}+C+1\rfloor !)^{\lceil 8(C-\log{\rho})\rceil}
\]
\end{theorem}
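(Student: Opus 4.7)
My plan is to prove Theorem~\ref{permGroupTheo} by bounding separately the two ingredients that appear in the formula for $f$: a base $m! = \lfloor\rho^{-1}+C+1\rfloor!$ bounding the image of $P$ on any single orbit, and an exponent $L = \lceil 8(C-\log\rho)\rceil$ bounding the effective number of orbits on which $P$ acts nontrivially.

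For the orbit-size bound I would combine the Cauchy--Frobenius (Burnside) lemma with Markov's inequality. For any $P$-orbit $\Omega_i$ of size $n_i$, transitivity yields $\E[\fix_{\Omega_i}(\sigma)] = 1$ for a uniformly random $\sigma\in P$. Any $\sigma\in\SB_C(P)$ satisfies $\fix_{\Omega_i}(\sigma)\geq n_i-C$, so Markov gives
\[
\rho \leq \Prob\bigl(\fix_{\Omega_i}(\sigma)\geq n_i-C\bigr) \leq \frac{1}{n_i-C}
\]
whenever $n_i>C$, hence $n_i\leq C+\rho^{-1}\leq m$. It follows that $|P_i|\leq n_i!\leq m!$ for the orbit image $P_i\leq\Sym(\Omega_i)$, and the diagonal of the orbit projections is an embedding $P\hookrightarrow\prod_i P_i$ over the nontrivial orbits. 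If $t$ denotes the number of such orbits this already gives $|P|\leq (m!)^t$, and it remains to bound an appropriate version of $t$ by $L$.

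To that end I would pass to a \emph{minimal} family of orbits: choose $S\subseteq\{1,\ldots,t\}$ of minimum cardinality such that $P\hookrightarrow\prod_{i\in S}P_i$ is still injective. By minimality each normal subgroup $N_i := \ker\bigl(P\to\prod_{j\in S\setminus\{i\}}P_j\bigr)$ is nontrivial, and since distinct $N_i$ have disjoint supports they commute pairwise and intersect pairwise trivially. Their internal product $D := \prod_{i\in S}N_i$ is therefore a normal direct-product subgroup of $P$ with $|D|\geq 2^{|S|}$, while the embedding still gives $|P|\leq (m!)^{|S|}$. So it suffices to prove $|S|\leq L$.

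The main obstacle is this bound on $|S|$. I would apply a Chernoff-type lower-tail inequality inside $D$: a uniformly random $\sigma\in D$ has independent coordinates $\sigma_i\in N_i$, each nontrivial with probability at least $1/2$; each nontrivial coordinate contributes $\geq 2$ to $\supp(\sigma)$, so any $\sigma\in D\cap\SB_C(P)$ has at most $C/2$ nontrivial coordinates. Standard Chernoff then yields
\[
\Prob_{\sigma\in D}\bigl(\supp(\sigma)\leq C\bigr) \leq \exp(-|S|/16)
\]
once $|S|\geq 2C$. The step I expect to require most care is transferring this bound on the density inside $D$ to the hypothesis $|\SB_C(P)|/|P|\geq\rho$ in the whole of $P$: a priori nothing forces $|\SB_C(P)\cap D|/|D|\geq\rho$, so one must exploit the normality of $D$ in $P$ together with the conjugation-invariance of $\SB_C(P)$ via a coset-averaging or translation argument to extract enough mass inside (a translate of) $D$. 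Combined with the Chernoff bound this forces $|S|\leq L$, and then $|P|\leq (m!)^{|S|}\leq (m!)^L = f(\rho,C)$ completes the proof.
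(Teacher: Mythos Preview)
Your orbit-size bound via Cauchy--Frobenius and Markov is exactly what the paper does, and it yields $|P_i|\leq\lfloor\rho^{-1}+C\rfloor!$ for every orbit image $P_i$. The divergence is in the second half.

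The transfer step you flag is a genuine gap as written. Neither normality of $D$ nor conjugation-invariance of $\SB_C(P)$ is the right lever: conjugating does nothing since $\SB_C(P)$ is already a union of conjugacy classes, and normality of $D$ only tells you its cosets are two-sided. What does work is a straight pigeonhole-plus-translation argument: some coset $gD$ carries $\SB_C$-density at least $\rho$, and for $\sigma=gd$ with $d$ uniform in $D$, on each orbit $\Omega_i$ with $i\in S$ the restriction $\sigma|_{\Omega_i}=g|_{\Omega_i}\cdot d_i$ is trivial for at most one value of $d_i\in N_i$, so the events ``$\sigma|_{\Omega_i}$ nontrivial'' are still independent with probability at least $1/2$ each. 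Your Chernoff estimate then applies verbatim to the coset. (You also need to first restrict to $\bigcup_{i\in S}\Omega_i$ so that the $N_i$ genuinely have disjoint supports; your claim to that effect is not literally true on all of $\Omega$, since $N_i$ may move points in orbits outside $S$.) With the sharper Hoeffding constant $\exp(-|S|/8)$ in place of your $\exp(-|S|/16)$ one recovers $|S|\leq 8(C-\log\rho)$; the constant you wrote would not quite reach the stated $L$.

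The paper sidesteps the whole transfer issue by working with a \emph{quotient} of $P$ rather than a subgroup. Instead of a minimal injective family, it greedily picks orbits $i_1,\ldots,i_s$ with $s=L$ so that each successive kernel-image $c_j:=|\pi_{i_j}[\ker\pi_{i_1}\cap\cdots\cap\ker\pi_{i_{j-1}}]|$ is at least $2$; this is possible while $|P|>(\lfloor\rho^{-1}+C\rfloor!)^s$. Projecting $P$ onto $\prod_{j=1}^s P_{i_j}$ can only decrease supports, so the image $\pi[P]$ automatically inherits $|\SB_C(\pi[P])|\geq\rho|\pi[P]|$ with no coset argument needed. Since $|\pi[P]|=c_1\cdots c_s$ and any element of $\SB_C(\pi[P])$ is trivial on all but at most $C$ of the $s$ coordinates, a direct union bound gives $|\SB_C(\pi[P])|/|\pi[P]|\leq\binom{s}{C}/2^{s-C}$, and one finishes by elementary estimates. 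The two arguments are close in spirit---both exploit $s$ ``independent'' binary choices among the orbits---but the projection framing is cleaner precisely because quotients inherit the density hypothesis for free, whereas subgroups do not.
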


\begin{proof}
This is clear if $C=0$, since then $\SB_C(P)=\{\id_{\Omega}\}$, whence $|\SB_C(P)|\geq\rho|P|$ is equivalent to $|P|\leq\rho^{-1}$, and
\[
\rho^{-1}\leq\lfloor\rho^{-1}+1\rfloor\leq\lfloor\rho^{-1}+1\rfloor !\leq(\lfloor\rho^{-1}+1\rfloor !)^{\lceil 8\log{\rho^{-1}}\rceil}.
\]
The assertion is also clear if $C\geq\deg(P)$. So we may henceforth assume that $1\leq C<\deg(P)$. We first show the following claim: \enquote{If $P$ is transitive, then $\deg(P)\leq\rho^{-1}+C$.}

To see that this claim holds true, consider the random variable $X=\fix(\sigma)$, where $\sigma\in P$ is assumed to be chosen uniformly at random. Then as noted in the proof of Theorem \ref{supp}, by the Cauchy-Frobenius Lemma, $\E(X)=1$.

Moreover, the Markov inequality (see for instance \cite[p. 265]{BT08}) shows that, for each positive integer $k$,
\[
{\bf P}(X\geq k)\leq\frac{\E(X)}{k}=\frac{1}{k}.
\]
Applied with $k:=\deg(P)-C$, this yields
\[
{\bf P}(\sigma\in\SB_C(P))\leq\frac{1}{\deg(P)-C},
\]
so that
\[
\rho\leq\frac{1}{\deg(P)-C},
\]
or equivalently, $\deg(P)\leq\rho^{-1}+C$. This concludes the proof of the above claim.

The claim yields in particular that the asserted upper bound on $|P|$ holds when $P$ is transitive. Let us now give an argument for general $P$. Let $\Omega=\Omega_1 \sqcup \cdots \sqcup \Omega_t$ be the partition of $\Omega$ into the orbits of $P$. For $i=1,\ldots,t$, denote by $P_i\leq\Sym(\Omega_i)$ the (transitive) image of $P$ under the restriction homomorphism $\pi_i:P\rightarrow\Sym(\Omega_i)$, $\sigma\mapsto\sigma_{\mid\Omega_i}$. Observe that $\pi_i[\SB_C(P)]\subseteq\SB_C(P_i)$, and so $|\SB_C(P_i)|\geq\rho|P_i|$ as well. Hence if, for any $i\in\{1,\ldots,t\}$, one has $|\Omega_i|>\rho^{-1}+C$, one gets a contradiction to the above claim. So we may assume that $|\Omega_i|\leq\rho^{-1}+C$ for each $i=1,\ldots,t$; in particular, $|P_i|\leq\lfloor\rho^{-1}+C\rfloor !$.

Aiming for a contradiction, assume now additionally that
\[
|P|>(\lfloor\rho^{-1}+C+1\rfloor !)^{\lceil 8(C-\log{\rho})\rceil}.
\]
Then
\[
\frac{|P|}{(\lfloor\rho^{-1}+C\rfloor !)^j}>1
\]
for $j=0,1,\ldots,\lceil 8(C-\log{\rho})\rceil$, allowing us to choose, for $s:=\lceil 8(C-\log{\rho})\rceil$, a length $s$ sequence $(i_1,\ldots,i_s)$ of pairwise distinct indices from $\{1,\ldots, t\}$ such that for each $j\in\{1,\ldots,s\}$,
\[
c_j:=|\pi_{i_j}[\ker(\pi_{i_1})\cap\cdots\cap\ker(\pi_{i_{j-1}})]|\geq2.
\]
What this means is that among all the elements of $P$, there occur $c_1\geq2$ distinct values in the $i_1$-th coordinate, and after fixing any of the $c_1$ many values in the $i_1$-th coordinate and considering only such elements of $P$, there still occur $c_2\geq2$ distinct values in the $i_2$-th coordinate, and after fixing both the $i_1$-th and $i_2$-th coordinate, there still occur $c_3\geq2$ distinct values in the $i_3$-th coordinate, and so on.

Now consider $\pi:P\rightarrow\prod_{j=1}^s{P_{i_j}}$, the projection of $P$ to the coordinates number $i_1,\ldots,i_s$. The image $\pi[P]$ still satisfies that $|\SB_C(\pi[P])|\geq\rho|\pi[P]|$, but on the other hand,
\begin{align*}
\SB_C(\pi[P])\subseteq &\{(\sigma_{i_1},\ldots,\sigma_{i_s})\in \pi[P]\leq\prod_{j=1}^s{P_{i_j}} \mid  \\
&\exists M\subseteq\{1,\ldots,s\}:(|M|=C\text{ and }\sigma_{i_r}=\id_{\Omega_r}\text{ for all }r\in\{1,\ldots,s\}\setminus M)\}.
\end{align*}
Letting $d_1\geq d_2\geq\cdots\geq d_s$ be such that the multisets $\{c_1,\ldots,c_s\}$ and $\{d_1,\ldots,d_s\}$ are equal, this yields the following upper bound on the proportion of elements in $\pi[P]$ with support size at most $C$:
\[
\frac{1}{|\pi[P]|}|\SB_C(\pi[P])|\leq\frac{1}{d_1\cdots d_s}{s \choose C}d_1\cdots d_{C}=\frac{{s \choose C}}{d_{C+1}\cdots d_s}\leq\frac{(\frac{\e s}{C})^{C}}{2^{s-C}}=\frac{(\frac{2\e s}{C})^{C}}{2^s}.
\]
We thus get the desired contradiction if we can argue that
\begin{equation}\label{sbEq}
\frac{(\frac{2\e s}{C})^{C}}{2^s}<\rho.
\end{equation}
Recall that $s=\lceil 8(C-\log{\rho})\rceil$, and set $s':=\frac{s}{C}$, so that $s=C\cdot s'$. Then
\[
\frac{(\frac{2\e s}{C})^{C}}{2^s}=\frac{(2\e s')^{C}}{2^{s'C}}=(\frac{2\e s'}{2^{s'}})^{C}=(\frac{\e s'}{2^{s'-1}})^{C},
\]
and that last expression is strictly smaller than $\rho$ if and only if
\[
s'-\log_2{s'}-1>\frac{1-\frac{1}{C}\log{\rho}}{\log{2}}.
\]
Now by definition,
\[
s'=\frac{s}{C}=\frac{\lceil 8(C-\log{\rho})\rceil}{C}\geq 8(1-\frac{\log{\rho}}{C})\geq 8,
\]
and so
\[
s'-\log_2{s'}-1\geq\frac{1}{2}s'.
\]
Hence Formula (\ref{sbEq}) is implied by
\[
s'> 2\cdot\frac{1-\frac{1}{C}\log{\rho}}{\log{2}}=\frac{2}{\log{2}}(1-\frac{1}{C}\log{\rho}),
\]
which is clear by definition of $s'$.
\end{proof}

In various cases we can obtain better bounds on $|P|$ also for intransitive groups.
Let $t$ denote the number of orbits of $P \le \Sym(n)$, and let $r$ denote the rank of $P$ (namely the number of orbits on ordered pairs of points). Clearly $r \ge t^2$.

\begin{proposition}\label{betterbound}  With the above notation we have:
\begin{enumerate}
\item The probability that a random element $\sigma \in P$ satisfies $\supp(\sigma) > (1-\epsilon)n$ is at least $1 - t/(\epsilon n)$
for any $0< \epsilon <1$. Thus this probability tends to $1$ as $t = o(n)$.
\item The probability that a random element $\sigma \in P$ satisfies $\supp(\sigma) > (1-\epsilon)n - t$ is at least $1 - (r-t^2)/(\epsilon^2 n^2)$
for any $0 < \epsilon < 1$. Thus this probability tends to $1$ as $r-t^2 = o(n^2)$.
\end{enumerate}
\end{proposition}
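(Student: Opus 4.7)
The plan is to apply standard concentration inequalities to the random variable $X:=\fix(\sigma)$, where $\sigma$ is drawn uniformly at random from $P$. For part (1) only the first moment is needed and Markov's inequality gives the conclusion; for part (2) I will also need the second moment, which is controlled by the rank $r$, and Chebyshev's inequality will do the work. The translation between $\fix$ and $\supp$ is just $\supp(\sigma)=n-\fix(\sigma)$.

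For part (1), by the Cauchy--Frobenius lemma (already invoked in the proof of Theorem \ref{supp}) one has $\E(X)=t$, the number of orbits of $P$ on $\{1,\ldots,n\}$. Markov's inequality with threshold $\epsilon n$ then yields
\[
{\bf P}(X\geq\epsilon n)\leq\frac{\E(X)}{\epsilon n}=\frac{t}{\epsilon n},
\]
and the complementary event $X<\epsilon n$ is exactly $\supp(\sigma)>(1-\epsilon)n$, proving (1).

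For part (2), the key additional ingredient is the second moment. I would observe that $X^2=\fix(\sigma)^2$ equals the number of ordered pairs $(i,j)\in\{1,\ldots,n\}^2$ fixed by $\sigma$ under the diagonal action of $P$ on $\{1,\ldots,n\}^2$. A second application of Cauchy--Frobenius, now to this diagonal action, gives $\E(X^2)=r$, the rank of $P$; in particular $\Var(X)=r-t^2\geq 0$, which also accounts for the inequality $r\geq t^2$ stated just before the proposition. Chebyshev's inequality then yields
\[
{\bf P}(|X-t|\geq\epsilon n)\leq\frac{\Var(X)}{\epsilon^2 n^2}=\frac{r-t^2}{\epsilon^2 n^2},
\]
and in particular ${\bf P}(X\geq t+\epsilon n)\leq(r-t^2)/(\epsilon^2 n^2)$. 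Rewriting the complementary event in terms of $\supp(\sigma)=n-X$ gives $\supp(\sigma)>(1-\epsilon)n-t$ with the claimed probability, proving (2).

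There is no real obstacle once the two moment identities are in hand; the only point that merits care is the quadratic version of Cauchy--Frobenius, namely $\E(\fix(\sigma)^2)=r$, which follows by applying the ordinary lemma to the (generally reducible) diagonal action on ordered pairs. Everything else is a direct invocation of Markov and Chebyshev and a routine translation between fixed points and support.
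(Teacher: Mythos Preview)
Your proof is correct and follows essentially the same approach as the paper: both apply Markov's inequality to $X=\fix(\sigma)$ with $\E(X)=t$ for part (1), and Chebyshev's inequality using $\E(X^2)=r$ (via Cauchy--Frobenius on ordered pairs) and $\Var(X)=r-t^2$ for part (2), then translate back to $\supp(\sigma)=n-X$.
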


\begin{proof}
The Markov inequality applied in the proof of the above theorem shows that,
for any fixed $\epsilon > 0$ we obtain (substituting $k = \epsilon n$),
\[
{\bf P}(\supp(\sigma) > (1-\epsilon)n) \ge 1 - t/k = 1 - \frac{t}{\epsilon n},
\]
which tends to $1$ provided $t = o(n)$. Part (1) follows.

For part (2) we use the second moment method for the random variable
$X = \fix(\sigma)$ ($\sigma \in P$). Then $\E(X) = t$, and as is well-known, by applying the Cauchy-Frobenius Lemma to the action of $P$ on $\{1,\ldots,n\}^2$, one also gets $\E(X^2) = r$. Therefore
\[
\Var(X) = \E(X^2)-\E(X)^2 = r - t^2.
\]

By the Chebyshev inequality (see for instance \cite[p. 267]{BT08}) we have
\[
{\bf P}(|X - \E(X)| \ge k) \le \frac{\Var(X)}{k^2}.
\]
Writing $k = \epsilon n$ we obtain
\[
{\bf P}(|X - t| < \epsilon n) \ge  1 - \frac{r-t^2}{\epsilon^2n^2}.
\]
Clearly $|X - t| < \epsilon n$ implies $\fix(\sigma) < t +  \epsilon n$, which yields
\[
\supp(\sigma) = n - \fix(\sigma) > (1-\epsilon)n - t.
\]
The result follows.
\end{proof}

Note that statement (1) of Proposition \ref{betterbound} implies that $\deg(P)\leq t\cdot{\bf P}(\supp(\sigma)\leq C)^{-1}+C$, which, adopting the notation from Theorem \ref{permGroupTheo} yields that $\deg(P)\leq t\rho^{-1}+C$, and so
\[
|P| \le \lfloor t\rho^{-1}+C \rfloor !,
\]
Similarly, statement (2) of Proposition \ref{betterbound} implies that, with the above notation, we have
$\deg(P)\leq \sqrt{r-t^2}\rho^{-1} + t + C$, which yields
\[
|P| \le \lfloor \sqrt{r-t^2}\rho^{-1} + t + C  \rfloor !.
\]

We conclude this section with the following example, which shows that (in the notation used in Proposition \ref{betterbound}(2)) $r-t^2$ is not always in $o(n^2)$:

\begin{example}\label{dihedralEx}
Let $P=\D_6=\Sym(3)$ in its regular action on itself (hence on $6$ points). Then $P$ is sharply $1$-transitive. For $m\in\IN^+$, $m\geq2$, let $\G_m$ be the set of length $m$ sequences $\vec{\sigma}=(\sigma_1,\ldots,\sigma_m)\in P^m$ such that $\ord(\sigma_i)=2$ for $i=1,\ldots,m$ and $|\{\sigma_1,\ldots,\sigma_m\}|\geq2$. Note that each such sequence $\vec{\sigma}$ is a generating sequence for $P$. Set $k_m:=|\G_m|$, denote by $\pi^{(m)}_i$, for $i=1,\ldots,m$, the projection $P^m\rightarrow P$ to the $i$-th coordinate, and let
\[
P_m:=\langle(\pi^{(m)}_1(\vec{\sigma}))_{\vec{\sigma}\in\G_m},\ldots,(\pi^{(m)}_m(\vec{\sigma}))_{\vec{\sigma}\in\G_m}\rangle\leq P^{\G_m}\cong P^{k_m}.
\]
Then $P_m$ is a $k_m$-fold subdirect power of $P$; in particular, all orbits $\Omega_j$, for $j=1,\ldots,k_m$, of $P_m$ are of length $6$. Note also that the $m$ listed generators of $P_m$ are pairwise distinct, so that $|P_m|\geq m$. For each $j\in\{1,\ldots,k_m\}$ and each $\omega\in\Omega_j$, the point stabilizer $(P_m)_{\omega}$ consists only of even length products of the listed generators of $P_m$; in particular, for each $l\in\{1,\ldots,k_m\}$, the restriction of each element of $(P_m)_{\omega}$ to $\Omega_l$ is contained in the unique index $2$ subgroup of the corresponding (sharply $1$-transitive) action of $P=\D_6$ on $\Omega_l$. Hence $(P_m)_{\omega}$ is intransitive on each orbit $\Omega_l$ of $P_m$, whence each Cartesian product $\Omega_j\times\Omega_l$ of orbits of $P$ splits into at least two distinct orbits under the component-wise action of $P^2$. In particular, $r(P_m)\geq 2t(P_m)^2$, and so
\[
r(P_m)-t(P_m)^2\geq t(P_m)^2=k_m^2=(\frac{6k_m}{6})^2=(\frac{\deg(P_m)}{6})^2=\frac{1}{36}\deg(P_m)^2.
\]
\end{example}

\section{Probabilistic identities}\label{sec4}

\subsection{Permutation-part-bounding words}\label{subsec4P1}

We now introduce another word property that will be relevant for the proof of Theorem \ref{mainTheo1}:

\begin{deffinition}\label{permDef}
Consider the following notations and concepts:
\begin{enumerate}
\item Let $H$ be a nontrivial finite semisimple group, say
\begin{align*}
&S_1^{n_1}\times\cdots\times S_r^{n_r}\leq H\leq\Aut(S_1^{n_1}\times\cdots\times S_r^{n_r}) \\
&=(\Aut(S_1)\wr\Sym(n_1))\times\cdots\times(\Aut(S_r)\wr\Sym(n_r)),
\end{align*}
where $S_1,\ldots,S_r$ are pairwise nonisomorphic nonabelian finite simple groups and $n_1,\ldots,n_r\in\IN^+$. For $i=1,\ldots,r$, denote by $\pi_i:H\rightarrow\Aut(S_i)\wr\Sym(n_i)$ the projection to the $i$-th coordinate, and let $H_i$ be the image of $H$ under $\pi_i$, which is again semisimple, with socle $S_i^{n_i}$. We introduce the following notations for isomorphism invariants of $H$:
\begin{enumerate}
\item $P(H):=H/(H\cap(\Aut(S_1)^{n_1}\times\cdots\times\Aut(S_r^{n_r})))$ for the so-called \emph{permutation part of $H$}, which we can view naturally as a subgroup of $\Sym(n_1)\times\cdots\times\Sym(n_r)$.
\item $\Perm(H)$ for the multiset $\{P(H_1),\ldots,P(H_r)\}$, and
\item $\MPO(H)$ for the number $\max\{|P(H_i)|\mid i=1,\ldots,r\}$.
\end{enumerate}
\item Let $w\in\F(X_1,\ldots,X_d)$ be a nontrivial word. We say that $w$ is \emph{permutation-part-bounding} (or \emph{PPB} for short) if and only if there is a function $f_w:\left(0,1\right]\rightarrow\left[1,\infty\right)$ such that for every nonsolvable finite group $G$ satisfying a probabilistic identity with respect to $w$ and $\rho$, $\MPO(H_1(G))\leq f_w(\rho)$.
\end{enumerate}
\end{deffinition}

Clearly, MB words are PPB. Moreover, we have the following:

\begin{lemmma}\label{ppbNlbLem}
The following hold:
\begin{enumerate}
\item Let $G$ be a finite group. Then $H_2(G)$ is a section of $\prod_{P\in\Perm(H_1(G))}{P}$.
\item PPB words are NLB.
\end{enumerate}
\end{lemmma}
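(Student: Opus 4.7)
The plan splits into two parts.

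For part (1), the strategy is to trace the tower of characteristic sections in Definition \ref{lambdaDef} against the coordinate decomposition of $H_1(G)$. With the notation of Definition \ref{permDef}, the socle is $T_1(G) = \prod_{i=1}^{r} S_i^{n_i}$, so
\[
G_2(G) = H_1(G)/T_1(G) \hookrightarrow \prod_{i=1}^{r}\bigl(\Aut(S_i)\wr\Sym(n_i)\bigr)/S_i^{n_i} \cong \prod_{i=1}^{r}\bigl(\Out(S_i)\wr\Sym(n_i)\bigr).
\]
The canonical projection from the last product onto $\prod_{i=1}^{r}\Sym(n_i)$ has kernel $\prod_{i=1}^{r}\Out(S_i)^{n_i}$, which is solvable by the Schreier conjecture (a consequence of CFSG). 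Restricting this projection to $G_2(G)$ gives a map whose kernel is a solvable normal subgroup of $G_2(G)$, hence contained in $R_2(G)=\Rad(G_2(G))$. Therefore $H_2(G)=G_2(G)/R_2(G)$ is a quotient of the image of $G_2(G)$ inside $\prod_{i=1}^{r}\Sym(n_i)$. Since for each $i$ the composition $H_1(G)\twoheadrightarrow G_2(G)\to \Out(S_i)\wr\Sym(n_i) \twoheadrightarrow \Sym(n_i)$ factors as $H_1(G)\twoheadrightarrow H_i \twoheadrightarrow P(H_i)$ by the very definition of the permutation part, the image of $G_2(G)$ in $\prod_{i=1}^{r}\Sym(n_i)$ lies inside $\prod_{i=1}^{r} P(H_i)$. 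Thus $H_2(G)$ is a section of $\prod_{i=1}^{r} P(H_i) = \prod_{P\in\Perm(H_1(G))} P$.

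For part (2), suppose $w$ is PPB with witness function $f_w$, and let $G$ satisfy a probabilistic identity with respect to $w$ and $\rho$; we may assume $G$ is nonsolvable. A direct induction on the recursion in Definition \ref{lambdaDef} gives $T_k(H_2(G)) = T_{k+1}(G)$ for all $k\geq 1$, so $\lambda(G) = 1 + \lambda(H_2(G))$. The PPB hypothesis applied to $G$ yields $|P|\leq f_w(\rho)$ for every $P\in\Perm(H_1(G))$. Combining (i) that $\lambda$ is non-increasing under sections, (ii) that $\lambda(A_1\times\cdots\times A_s) = \max_j \lambda(A_j)$ since $\Rad$ and $\Soc$ commute with direct products, and (iii) that $\lambda(Q)\leq\log_{60}|Q|$ for any finite $Q$ (because each nontrivial $T_k(Q)$ is a direct product of nonabelian finite simple groups and so has order at least $60$), together with part (1) we obtain
\[
\lambda(H_2(G)) \leq \max_{P\in\Perm(H_1(G))}\lambda(P) \leq \log_{60}f_w(\rho),
\]
and hence $\lambda(G)\leq 1 + \log_{60}f_w(\rho)$, a bound depending only on $w$ and $\rho$. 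Thus $w$ is NLB.

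There is no serious technical obstacle; the proof is essentially careful bookkeeping of the tower of characteristic sections defining $\lambda$ against the coordinate-wise semisimple structure of $H_1(G)$. The only nontrivial external input is the Schreier conjecture, invoked in part (1) to guarantee that the kernel of the projection to the product of permutation parts is solvable and hence absorbed by the radical $R_2(G)$.
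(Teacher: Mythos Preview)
Your proof of part~(1) is essentially identical to the paper's: both identify the solvable normal subgroup of $G_2(G)$ coming from $\prod_i\Out(S_i)^{n_i}$ (via Schreier) and observe that the resulting quotient embeds into $\prod_i P(H_i)$.

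Your proof of part~(2) is correct but genuinely different from the paper's, and in fact cleaner. The paper bounds the \emph{order} of $H_2(G)$: it invokes \cite[Theorem~5.2]{LS18a} to bound the number $r$ of nonisomorphic simple factors in $\Soc(H_1(G))$ by some $N_w(\rho)$, and then uses $|H_2(G)|\leq\prod_{P\in\Perm(H_1(G))}|P|\leq f_w(\rho)^{N_w(\rho)}$ together with $60^{\lambda(G)-1}\leq|H_2(G)|$. You instead bound $\lambda(H_2(G))$ directly, exploiting the identity $\lambda(A_1\times\cdots\times A_s)=\max_j\lambda(A_j)$ so that the number of factors becomes irrelevant. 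This lets you dispense with the Larsen--Shalev composition-factor bound entirely and yields the sharper explicit bound $\lambda(G)\leq 1+\log_{60}f_w(\rho)$, as opposed to the paper's $\lambda(G)\leq 1+\frac{1}{\log 60}N_w(\rho)\log f_w(\rho)$. The trade-off is that the paper's argument, while using an extra black box, more closely parallels the route that will be needed later when one actually wants to control $[G:\Rad(G)]$ rather than just $\lambda(G)$.
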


\begin{proof}
For (1): By definition,
\[
H_2(G)=G_2(G)/R_2(G)=(H_1(G)/\Soc(H_1(G)))/\Rad(H_1(G)/\Soc(H_1(G))).
\]
It is thus sufficient to show that $G_2(G)=H_1(G)/\Soc(H_1(G))$ has a solvable normal subgroup $N$ such that $G_2(G)/N$ is isomorphic to a subgroup of $\prod_{P\in\Perm(H_1(G))}{P}$. Letting $\Soc(H_1(G))\cong S_1^{n_1}\times\cdots\times S_r^{n_r}$ where $S_1,\ldots,S_r$ are pairwise nonisomorphic nonabelian finite simple groups and $n_1,\ldots,n_r\in\IN^+$, we may view, up to natural isomorphism,
\[
S_1^{n_1}\times\cdots\times S_r^{n_r}\leq H_1(G)\leq\Aut(S_1^{n_1}\times\cdots\times S_r^{n_r}).
\]
We then find that
\begin{align*}
&N:=((\Aut(S_1)^{n_1}\times\cdots\times\Aut(S_r)^{n_r})\cap H_1(G))/(S_1^{n_1}\times\cdots\times S_r^{n_r}) \\
&\hookrightarrow\Out(S_1)^{n_1}\times\cdots\times\Out(S_r)^{n_r}
\end{align*}
is a suitable choice.

For (2): Let $w\in\F(X_1,\ldots,X_d)\setminus\{1\}$ be PPB, and assume that $G$ is a finite group that satisfies a probabilistic identity with respect to that word $w$ and some given $\rho\in\left(0,1\right]$. We want to bound $\lambda(G)$ in terms of $w$ and $\rho$. If $G$ is solvable, then $\lambda(G)=0$, so assume that $G$ is nonsolvable. Then $|\MPO(G/\Rad(G))|=|\MPO(H_1(G))|\leq f_w(\rho)$, where $f_w$ is as in the definition of PPB words. In other words, $|P|\leq f_w(\rho)$ for each $P\in\Perm(G/\Rad(G))$. Moreover, by \cite[Theorem 5.2]{LS18a}, there is an $N_w(\rho)>0$ such that all nonabelian composition factors of $G$ have order at most $N_w(\rho)$. In particular, the number of nonisomorphic simple direct factors in $\Soc(G/\Rad(G))$ is bounded from above by $N_w(\rho)$ (because for each $k\geq 1$, the number of isomorphism types of nonabelian finite simple groups up to order $k$ is at most $k$, as the orders of nonabelian finite simple groups are even and for each given order, there are at most two nonisomorphic nonabelian finite simple groups of that order). Using statement (1), it follows that
\[
60^{\lambda(G)-1}\leq|H_2(G)|\leq|\prod_{P\in\Perm(G/\Rad(G))}{P}|\leq f_w(\rho)^{N_w(\rho)},
\]
and thus
\[
\lambda(G)\leq \frac{1}{\log{60}}N_w(\rho)\log{f_w(\rho)}+1.
\]
\end{proof}

In particular, the proof of Theorem \ref{mainTheo1} is now reduced to the following, which we will show next:

\begin{lemmma}\label{avsmbIsPpbLem}
Almost VSMB words are PPB.
\end{lemmma}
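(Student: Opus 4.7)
The plan is, after reducing to a single wreath-product factor of $H_1(G)$, to run an orbit-by-orbit analysis of the probabilistic identity in $\Aut(S_i)\wr\Sym(n_i)$, use almost VSMB to produce fiber savings for the base equations at each coordinate with a proper variation, and then harvest many elements of $P(H_i)$ of small support to feed into Theorem \ref{permGroupTheo}.

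First I would reduce. Since $G$ satisfies a probabilistic identity with respect to $w$ and $\rho$, so does $H_1(G)$, and so does the image $H:=H_i$ of $H_1(G)$ in $\Aut(S_i)\wr\Sym(n_i)$, with the same $\rho$. By \cite[Theorem 5.2]{LS18a} we have $|S_i|\leq M$ for some $M=M(w,\rho)$, so $|\Aut(S_i)|$ is bounded as well. Writing $S=S_i$, $n=n_i$, and $P=P(H_i)\leq\Sym(n)$, I may (after absorbing a bounded-index adjustment into the final constant) model $H$ as $\Aut(S)^n\rtimes P$, so that a uniform $(h_1,\ldots,h_d)\in H^d$ decomposes as independent uniform $\bar\alpha\in\Aut(S)^{nd}$ and $\bar\pi=(\pi_1,\ldots,\pi_d)\in P^d$. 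If some variable of $w$ occurs only once, then $w_G$ has maximal fiber proportion $1/|G|$ and $|G|\leq\rho^{-1}$ immediately; so I may assume every variable of $w$ occurs at least twice. It now suffices, via Theorem \ref{permGroupTheo}, to produce constants $C,\rho''>0$ depending only on $w,\rho$ such that at least a $\rho''$-fraction of elements $\sigma\in P$ satisfy $\supp(\sigma)\leq C$.

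The core is that the $k$-th base entry of $w(\bar h)$ equals $w^{(k)}(\bar\alpha)$ for a variation $w^{(k)}$ of $w$ determined by $\bar\pi$ and the walk starting at $k$, and $w^{(k)}$ is a \emph{proper} variation precisely when some original variable visits two or more distinct coordinates along that walk. Almost VSMB forces every such proper $w^{(k)}$ to be WMB, and combined with $|S|\leq M$ this gives a uniform fiber-proportion bound $1-\varepsilon$, for some $\varepsilon=\varepsilon(w,\rho)>0$, across the finitely many relevant variations and coset-representative choices. Equations at coordinates in distinct $\langle\bar\pi\rangle$-orbits use disjoint base variables, so conditional probabilities factor across orbits. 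A crucial refinement shows that within a single $\bar\pi$-orbit $O$ of size $m$ containing at least one proper-variation coordinate, one can extract a chain of essentially independent WMB-equations (walks have length at most $|w|$, so coordinates of $O$ lying far apart give equations on disjoint variables), yielding an orbit-wise saving of the form $\tilde c^{-m}$ for some $\tilde c=\tilde c(w,\rho)>1$. This gives $\Pr[w(\bar h)=g\mid\bar\pi]\leq\tilde c^{-S_{pv}(\bar\pi)}$, where $S_{pv}(\bar\pi)$ is the total number of points lying in proper-variation orbits of $\bar\pi$; a Markov-type comparison with $\Pr[w(\bar h)=g]\geq\rho$ then yields $S_{pv}(\bar\pi)\leq C$ with probability at least $\rho/2$, for some $C=O_{w,\rho}(1)$.

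To finish, I would fix two positions $r<r'$ in $w$ where the same variable $X_{i_r}$ occurs, chosen so that $X_{i_r}$ does not occur strictly between them (possible by the standing assumption that every variable occurs at least twice). Set $v:=\tau_r\tau_{r'}^{-1}$, where $\tau_s$ denotes the length-$(s-1)$ prefix $X_{i_1}^{\epsilon_1}\cdots X_{i_{s-1}}^{\epsilon_{s-1}}$ of $w$; one checks directly from the definition of proper variation that $v(\bar\pi)$ fixes every $k\in\{1,\ldots,n\}$ which is not a proper-variation coordinate for $\bar\pi$, so $\supp(v(\bar\pi))\leq S_{pv}(\bar\pi)$, and hence $\supp(v(\bar\pi))\leq C$ with probability at least $\rho/2$. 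Our choice of $(r,r')$ arranges that $X_{i_r}$ occurs exactly once in $v$, so the induced word map $v\colon P^d\to P$ has all fibers of size at most $|P|^{d-1}$; therefore at least $(\rho/2)|P|$ elements $\sigma\in P$ satisfy $\supp(\sigma)\leq C$, and Theorem \ref{permGroupTheo} bounds $|P|$ in terms of $w,\rho$. The main obstacle is the within-orbit refinement giving an $m$-exponential saving per proper-variation orbit: without it, a single huge such orbit could make $S_{pv}$ large while contributing only one $(1-\varepsilon)$ factor to the conditional probability, leaving $|P|$ unbounded via this approach.
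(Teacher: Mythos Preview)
Your overall strategy matches the paper's: reduce to one factor $H$ with $\Soc(H)=S^n$, rewrite $w(\cdot)=h$ coset-wise into $n$ coset word equations over $S$ (via \cite[Lemma~2.7]{Bor17a}), get a $(1-\varepsilon)$ saving at each proper-variation coordinate via almost VSMB together with the bound $|S|\leq N_w(\rho)$, and then use a segment word $v$ carrying a multiplicity-one variable to feed Theorem~\ref{permGroupTheo}. But your detour through $\langle\bar\pi\rangle$-orbits introduces a genuine gap. You define $S_{pv}(\bar\pi)$ as the total number of points in orbits containing \emph{at least one} proper-variation coordinate and claim an orbit-wise saving $\tilde c^{-m}$ for each such orbit of size $m$. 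This is not justified: coordinates of $O$ lying far apart do give equations on disjoint base variables, but there is no reason those far-apart equations are themselves proper-variation (WMB) equations. A single large orbit could contain just one proper-variation coordinate, contributing only a single $(1-\varepsilon)$ factor while making $S_{pv}=m$ --- exactly the scenario your own final sentence flags, and it is not ruled out.

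The paper never passes to orbits. It fixes $(j_1,j_2)$ with $x_{j_1}=x_{j_2}$ and the intermediate segment repetition-free, sets $v=I_{j_1}(w)^{-1}I_{j_2}(w)$ (which \emph{is} the reduced segment, so $x_{j_1}$ genuinely has multiplicity one in $v$; note that your $v=\tau_r\tau_{r'}^{-1}$ is instead a $\tau_r$-conjugate of the inverse segment, in which $X_{i_r}$ typically occurs $2a{+}1$ times rather than once), and observes that for every $t\in\Supp(v(\bar\sigma))$ the equation at coordinate $\chi_{j_1}(t)$ is automatically a $(j_1,j_2)$-split proper variation. Thus the relevant count is $\supp(v(\bar\sigma))$ itself, with no orbit bookkeeping: among these $\supp(v(\bar\sigma))$ equations one extracts at least $\supp(v(\bar\sigma))/\ell^2$ with pairwise disjoint variable sets (as in \cite[proof of Lemma~2.12]{Bor17a}), and the dichotomy $\supp(v(\bar\sigma))>C(\rho)$ versus $\leq C(\rho)$ gives $|\SB_{C(\rho)}(P)|/|P|\geq\rho/2$ directly. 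One further small point: modeling $H$ as the full $\Aut(S)^n\rtimes P$ is not a bounded-index replacement, since $[\Aut(S)^n\rtimes P:H]$ can be as large as $|\Out(S)|^n$; the paper instead works inside $H$, counting solutions over cosets of $\Soc(H)=S^n$.
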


\begin{proof}
Let $w$ be an almost VSMB word, let $\rho\in\left(0,1\right]$, and assume that a finite nonsolvable group $G$ satisfies a probabilistic identity with respect to $w$ and $\rho$. Then every quotient of $G$ also satisfies a probabilistic identity with respect to $w$ and $\rho$; in particular, writing $\Soc(H_1(G))=S_1^{n_1}\times\cdots\times S_r^{n_r}$ where $S_1,\ldots,S_r$ are pairwise nonisomorphic nonabelian finite simple groups and $n_1,\ldots,n_r\in\IN^+$, for $i=1,\ldots,r$, the group $H_{1,i}(G)$, defined as the projection of $H_1(G)\leq\Aut(S_1^{n_1})\times\cdots\times\Aut(S_r^{n_r})$ to the $i$-th coordinate, satisfies a probabilistic identity with respect to $w$ and $\rho$. Note that up to isomorphism, $S_i^{n_i}\leq H_{1,i}(G)\leq\Aut(S_i^{n_i})=\Aut(S_i)\wr\Sym(n_i)$, and that when setting
\[
P_{1,i}(G):=P(H_{1,i}(G))\hookrightarrow\Sym(n_i),
\]
one has by definition that $\Perm(H_1(G))=\{P_{1,i}(G),\ldots,P_{1,r}(G)\}$. So our goal is to find an upper bound in terms of $w$ and $\rho$ on $\max\{|P_{1,i}(G)|\mid i=1,\ldots,r\}$.

To that end, fix $i\in\{1,\ldots,r\}$, and for notational simplicity, write $S$ instead of $S_i$, $n$ instead of $n_i$, $H$ instead of $H_{1,i}(G)$, and $P$ instead of $P_{1,i}(G)$. For $\sigma\in P$, denote by $\Supp(\sigma)$ the set of points moved by $\sigma$ (so that, using the notation from Section \ref{sec3}, $\supp(\sigma)=|\Supp(\sigma)|$). Recall from above that $H$ satisfies a probabilistic identity with respect to $w$ and $\rho$, so we can fix an element $h=(\beta_1,\ldots,\beta_n)\psi\in H$ such that $p_{w,H}(h)\geq\rho$. Note: If $w$ is a \emph{repetition-free word}, i.e., if the maximum multiplicity of a variable in $w$ is $1$ (no variable occurs more than once in $w$), then the probabilistic identity implies that $|H|\leq\rho^{-1}$; in particular, $|P|\leq\rho^{-1}$ then, and we are done. So we may assume that $w$ is \emph{not} repetition-free. Writing $w=x_1^{\epsilon_1}\cdots x_{\ell}^{\epsilon_{\ell}}$ where $\ell$ is the length of $w$, $\epsilon_1,\ldots,\epsilon_{\ell}\in\{\pm1\}$ and $x_1,\ldots,x_{\ell}\in\{X_1,\ldots,X_d\}$, we can find indices $j_1,j_2\in\{1,\ldots,\ell\}$ with $j_1<j_2$ such that $x_{j_1}=x_{j_2}$, $x_j\not=x_{j_1}$ for all $j\in\{j_1+1,\ldots,j_2-1\}$, and the (possibly empty) word segment $x_{j_1+1}^{\epsilon_{j_1+1}}\cdots x_{j_2-1}^{\epsilon_{j_2-1}}$ is repetition-free. Moreover, for $j=1,\ldots,\ell$, define the word
\[
v_j:=\begin{cases}x_1^{\epsilon_1}\cdots x_{j-1}^{\epsilon_{j-1}}, & \text{if }\epsilon_j=1, \\ x_1^{\epsilon_1}\cdots x_j^{\epsilon_j} & \text{if }\epsilon_j=-1,\end{cases}
\]
see also \cite[Lemma 2.7]{Bor17a} and our Notation \ref{segmentsNot}(1), and set $v:=v_{j_1}^{-1}v_{j_2}$, see also Notation \ref{segmentsNot}(2). Note that by choice of $j_1$ and $j_2$, $v$ is a nonempty reduced word in which some variable occurs with multiplicity $1$.

We bound the number of solutions to the equation $w(y_1,\ldots,y_d)=h$, where $y_1,\ldots,y_d$ are variables ranging over $H$, in a $\Soc(H)$-coset-wise counting argument. More precisely, fix first a $d$-tuple $(\sigma_1,\ldots,\sigma_d)\in P^d$. There are two fundamentally different cases in the counting argument, according to whether or not $v(\sigma_1,\ldots,\sigma_d)\in\SB_{C(\rho)}(P)$, where $C(\rho):=\ell^2\cdot\frac{\log(2/\rho)}{\log(1+1/(N_w(\rho)^{\ell}-1))}$ and $N_w(\rho)$ is chosen such that all nonabelian composition factors of a finite group that satisfies a probabilistic identity with respect to $w$ and $\rho$ have order at most $N_w(\rho)$.
\begin{enumerate}
\item Assume first that $v(\sigma_1,\ldots,\sigma_d)\notin\SB_{C(\rho)}(P)$, i.e., that $\supp(v(\sigma_1,\ldots,\sigma_d))>C(\rho)$. For each $k=1,\ldots,d$, fix one of the $[(H\cap\Aut(S)^n):S^n]$ many cosets of $S^n$ in $H$ that have permutation part $\sigma_k$, say with coset representative $(\alpha_{k,1},\ldots,\alpha_{k,n})\sigma_k$, and consider the equation
\[
w((s_{1,1}\alpha_{1,1},\ldots,s_{1,n}\alpha_{1,n})\sigma_1,\ldots,(s_{d,1}\alpha_{d,1},\ldots,s_{d,n}\alpha_{d,n})\sigma_d)=h=(\beta_1,\ldots,\beta_n)\psi,
\]
where the $s_{k,t}$, for $k=1,\ldots,d$ and $t=1,\ldots,n$, are variables ranging over $S$. As described in \cite[Lemma 2.7]{Bor17a}, this equation can be rewritten into the conjunction of the single word equation $w(\sigma_1,\ldots,\sigma_d)=\psi$ and the system of coset word equations over $S$ with respect to some variations of $w$ whose $t$-th equation, for $t\in\{1,\ldots,n\}$, looks like this:
\[
(s_{\iota(1),\chi_1^{-1}(t)}\alpha_{\iota(1),\chi_1^{-1}(t)})^{\epsilon_1}\cdots(s_{\iota(\ell),\chi_{\ell}^{-1}(t)}\alpha_{\iota(\ell),\chi_{\ell}^{-1}(t)})^{\epsilon_{\ell}}=\beta_t,
\]
where $\iota$ is the unique function $\{1,\ldots,\ell\}\rightarrow\{1,\ldots,d\}$ such that $x_j=X_{\iota(j)}$ for $j=1,\ldots,\ell$, and $\chi_j=v_j(\sigma_1,\ldots,\sigma_d)$ for $j=1,\ldots,\ell$.

Hence for each $t\in\Supp(v(\sigma_1,\ldots,\sigma_d))$, the underlying word of the $\chi_{j_1}(t)$-th coset word equation in the above equation system is a proper variation of $w$, as follows by considering the $j_1$-th and $j_2$-th factors in the product on the left-hand side: $\iota(j_1)=\iota(j_2)$ (i.e., $w$ has the same variable, possibly with different exponents $\pm1$, in those positions), but
\[
\chi_{j_1}^{-1}(\chi_{j_1}(t))=t\not=(\chi_{j_2}^{-1}\chi_{j_1})(t)=\chi_{j_2}^{-1}(\chi_{j_1}(t))
\]
(so the second indices of the variables at those positions in the $\chi_{j_1}(t)$-th coset word equation are different). As $w$ is assumed to be almost VSMB, this implies that each coset word equation labeled by an index from $\chi_{j_1}[\Supp(v(\sigma_1,\ldots,\sigma_d))]$ is not universally solvable; in particular, since $|S|\leq N_w(\rho)$, its proportion of solutions (among the variables that occur in it) is at most $1-\frac{1}{N_w(\rho)^l}$.

But as in \cite[proof of Lemma 2.12]{Bor17a}, since $|\chi_{j_1}[\Supp(v(\sigma_1,\ldots,\sigma_d))]|>C(\rho)$, we can find at least $\lceil C(\rho)/\ell^2\rceil$ pairwise distinct indices in $\chi_{j_1}[\Supp(v(\sigma_1,\ldots,\sigma_d))]$ such that the corresponding equations in the above system have pairwise disjoint occurring variable sets (i.e., they are \enquote{pairwise independent}), and this implies that the proportion of solutions (in $S^{nd}$) of the entire system of equations is at most
\[
(1-\frac{1}{N_w(\rho)^{\ell}})^{\lceil C(\rho)/\ell^2\rceil}\leq(1-\frac{1}{N_w(\rho)^{\ell}})^{C(\rho)/\ell^2}=\frac{\rho}{2},
\]
where the equality is by definition of $C(\rho)$.
\item Assume now that $v(\sigma_1,\ldots,\sigma_d)\in\SB_{C(\rho)}(P)$. Then we do not give a nontrivial upper bound on the number of solutions per $d$-tuple of socle cosets with permutation parts $(\sigma_1,\ldots,\sigma_d)$, but we note that since $v$ contains some variable with multiplicity $1$, the proportion of such $d$-tuples $(\sigma_1,\ldots,\sigma_d)$ in $P^d$ is exactly $\frac{1}{|P|}|\SB_{C(\rho)}(P)|$.
\end{enumerate}
In combination, this yields the following:
\[
\rho\leq p_{w,H}(h)\leq\frac{\rho}{2}+\frac{1}{|P|}|\SB_{C(\rho)}(P)|,
\]
and thus
\[
\frac{1}{|P|}|\SB_{C(\rho)}(P)|\geq\frac{\rho}{2},
\]
so that an application of Theorem \ref{permGroupTheo} shows that $|P|$ can indeed be bounded from above in terms of $w$ and $\rho$, as required.
\end{proof}

\subsection{Proof of Corollary \ref{mbRadCor}}\label{subsec4P2}

Let $G$ be a finite group. Assume first that $[G:\Rad(G)]\leq C$ for some constant $C>0$. Then since $\Rad(G)$ is solvable (i.e., it only has abelian composition factors), for each nonabelian finite simple group $S$, the multiplicities of $S$ in $G$ and $G/\Rad(G)$ are the same. It follows that $[G:\Rad(G)]$, and hence $C$, is an upper bound on the product of the orders of the nonabelian composition factors of $G$, counted with multiplicities. In particular, the maximum multiplicity of a nonabelian composition factor of $G$ is at most $\log_{60}(C)$. This shows the implication \enquote{$\Leftarrow$} in the first sentence of Corollary \ref{mbRadCor}.

Now assume that for each nonabelian finite simple group $S$, the multiplicity of $S$ in $G$ is at most $C_S$ for some constant $C_S>0$ that may depend on $S$. Assume also that the maximum order of a nonabelian composition factor of $G$ is bounded from above by another constant $C>0$. Then let $D$ be the maximum value of $C_S$ where $S$ ranges over the (finitely many) nonabelian finite simple groups of order at most $C$, so that any nonabelian composition factor of $G$ occurs with multiplicity at most $D$. It follows that the socle $T_1(G)$ of $G/\Rad(G)$, which is of the form $S_1^{n_1}\times\cdots\times S_r^{n_r}$ where $S_1,\ldots,S_r$ are pairwise nonisomorphic nonabelian finite simple groups and $n_1,\ldots,n_r\in\IN^+$, satisfies
\[
|T_1(G)|\leq C^{Dr}\leq C^{CD},
\]
where the latter inequality uses that there are at most $C$ distinct isomorphism types of nonabelian finite simple groups of order at most $C$ (as was already observed in the proof of Lemma \ref{ppbNlbLem}(2) above). This concludes the proof of the implication \enquote{$\Rightarrow$} in the first sentence of Corollary \ref{mbRadCor}.

For the second sentence (the \enquote{In particular}), just observe that $[G:\Rad(G)]\geq\prod_{k=1}^{\infty}{|T_k(G)|}\geq 60^{\lambda(G)}$. This concludes the proof of Corollary \ref{mbRadCor}.

We thus have the following implication diagram between the various word properties considered in this paper:

\begin{center}
\begin{tikzpicture}
\matrix (m) [matrix of math nodes, row sep=3em,
column sep=3em]
{ \text{VSMB} & \text{almost VSMB} & \\
\text{MB} & \text{PPB} & \text{NLB} \\
\text{WMB} & & \\
};
\path[-implies]
(m-1-1) edge[double] (m-1-2)
(m-1-1) edge[double] (m-2-1)
(m-1-2) edge[double] (m-2-2)
(m-2-1) edge[double] (m-2-2)
(m-2-1) edge[double] (m-3-1)
(m-2-2) edge[double] (m-2-3);
\end{tikzpicture}
\end{center}

\subsection{Proofs of Theorem \ref{mainTheo1} and Corollary \ref{mainCor1}}\label{subsec4P3}

The proof of Theorem \ref{mainTheo1} is immediate by combining Lemmas \ref{ppbNlbLem} and \ref{avsmbIsPpbLem}. For Corollary \ref{mainCor1}, note that by \cite[Theorem 1.1.2(3)]{Bor17a}, all nontrivial words $w$ of length at most $8$ are almost VSMB, so that we can conclude by an application of Theorem \ref{mainTheo1}.

\section{Identities}\label{sec5}

\subsection{Segment identities}\label{subsec5P1}

As noted in the Introduction, we will prove a result (Theorem \ref{mainTheo2} below) which will allow us to show that under certain assumptions, if a finite semisimple group $H$ satisfies some identity $w\equiv1$, then the permutation part $P(H)$ satisfies a shorter identity $v\equiv1$, where $v$ is some proper segment of $w$. Let us first introduce some notations and terminology and then formulate and prove Theorem \ref{mainTheo2}.

\begin{nottation}\label{segmentsNot}
Let $w\in\F(X_1,\ldots,X_d)$, say $w=x_1^{\epsilon_1}\cdots x_{\ell}^{\epsilon_{\ell}}$ where $\ell$ is the length of $w$, and for $i=1,\ldots,\ell$, $x_i\in\{X_1,\ldots,X_d\}$ and $\epsilon_i\in\{\pm1\}$.
\begin{enumerate}
\item For $i=1,\ldots,\ell$, set
\[
I_i(w):=\begin{cases}x_1^{\epsilon_1}\cdots x_{i-1}^{\epsilon_{i-1}}, & \text{if }\epsilon_i=1, \\ x_1^{\epsilon_1}\cdots x_i^{\epsilon_i}, & \text{if }\epsilon_i=-1.\end{cases}
\]
\item For $1\leq i<j\leq \ell$, set
\[
\Delta_{i,j}(w):=I_i(w)^{-1}I_j(w)
\]
\end{enumerate}
\end{nottation}

Note the following two simple facts:

\begin{remmark}\label{segmentsRem}
Using the notation from Notation \ref{segmentsNot}, we note the following:
\begin{enumerate}
\item The words $\Delta_{i,j}(w)$ are segments of $w$.
\item $\Delta_{i,j}(w)$ is empty if and only if $j=i+1$, $\epsilon_i=-1$ and $\epsilon_j=\epsilon_{i+1}=1$. In particular, since $w$ is reduced, $\Delta_{i,j}(w)$ is always nonempty if $i$ and $j$ are such that $x_i=x_j$.
\item $\Delta_{i,j}(w)=w$ if and only if $i=1$, $j=\ell$, $\epsilon_1=1$ and $\epsilon_{\ell}=-1$.
\end{enumerate}
\end{remmark}

\begin{deffinition}\label{splitDef}
Let $w\in\F(X_1,\ldots,X_d)$, with notation as in Notation \ref{segmentsNot}. Moreover, let $w'=y_1^{\epsilon_1}\cdots y_{\ell}^{\epsilon_{\ell}}$ be a variation of $w$, and let $1\leq i<j\leq\ell$. We say that $w'$ is an \emph{$(i,j)$-split variation of $w$} if and only if $x_i=x_j$ and $y_i\not=y_j$.
\end{deffinition}

\begin{theoremm}\label{mainTheo2}
Let $w\in\F(X_1,\ldots,X_d)$, with notation as in Notation \ref{segmentsNot}. Also, assume that for some given $i,j\in\{1,\ldots,\ell\}$ with $i<j$ and $x_i=x_j$, all $(i,j)$-split variations of $w$ are WMB. Then, if a finite semisimple group $H$ satisfies the identity $w\equiv1$, then the permutation part $P(H)$ satisfies the identity $\Delta_{i,j}(w)\equiv 1$. In particular, there is a nontrivial word $v\in\F(X_1,\ldots,X_d)$ of length strictly smaller than $\ell$ such that $P(H)$ satisfies the identity $v\equiv1$.
\end{theoremm}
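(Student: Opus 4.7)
The plan is to fix an arbitrary tuple $\vec\sigma=(\sigma_1,\ldots,\sigma_d)\in P(H)^d$ and show $\Delta_{i,j}(w)(\vec\sigma)=1$. First I would reduce to a single simple factor: writing $\Soc(H)=S_1^{n_1}\times\cdots\times S_r^{n_r}$, the projection $\pi_k:H\to H_k\leq\Aut(S_k)\wr\Sym(n_k)$ gives a semisimple quotient still satisfying $w\equiv 1$, and $P(H)$ embeds as a subdirect product of the $P(H_k)$, so it suffices to prove the identity on each $P(H_k)$. Henceforth assume $S^n\trianglelefteq H\leq\Aut(S)\wr\Sym(n)$ with $P(H)\leq\Sym(n)$.

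Next I would exploit the inclusion $S^n\leq H$: pick any lifts $\tilde h_k=(\alpha_{k,1},\ldots,\alpha_{k,n})\sigma_k\in H$, and for arbitrary $s_{k,t}\in S$ form $h_k:=(s_{k,1}\alpha_{k,1},\ldots,s_{k,n}\alpha_{k,n})\sigma_k\in H$. The identity $w\equiv 1$ on $H$ forces $w(h_1,\ldots,h_d)=1$ for every choice of the $s_{k,t}$. Expanding this in $\Aut(S)\wr\Sym(n)$ exactly as in the proof of Lemma \ref{avsmbIsPpbLem} (cf.\ \cite[Lemma 2.7]{Bor17a}) yields, for each coordinate $t\in\{1,\ldots,n\}$, a coset word equation over $S$ whose $k$-th factor is $(s_{\iota(k),\chi_k^{-1}(t)}\alpha_{\iota(k),\chi_k^{-1}(t)})^{\epsilon_k}$, where $\chi_k:=I_k(w)(\vec\sigma)\in\Sym(n)$ and $\iota:\{1,\ldots,\ell\}\to\{1,\ldots,d\}$ records the variable at each position of $w$.

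The heart of the argument is then a contradiction: suppose $\Delta_{i,j}(w)(\vec\sigma)=\chi_i^{-1}\chi_j$ is nontrivial in $\Sym(n)$, and pick $t\in\{1,\ldots,n\}$ with $\chi_i^{-1}(t)\neq\chi_j^{-1}(t)$ (such $t$ exists since $(\chi_j\chi_i^{-1})(t)=t$ iff $\chi_i^{-1}(t)=\chi_j^{-1}(t)$). Because $x_i=x_j$, positions $i$ and $j$ of the $t$-th coset equation share the first index $\iota(i)=\iota(j)$ but differ in their second indices, so its underlying word is an $(i,j)$-split variation of $w$. By hypothesis this variation is WMB, i.e.\ not a coset identity over $S$; but universal solvability of the $t$-th equation over all $(s_{k,t})\in S^{dn}$ asserts precisely that this variation \emph{is} a coset identity with respect to the automorphisms $\alpha_{\iota(k),\chi_k^{-1}(t)}$, a contradiction. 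The main obstacle is the bookkeeping linking each coordinate $t$ to the $(i,j)$-split structure via the second indices $\chi_k^{-1}(t)$, but once this is set up cleanly the WMB hypothesis closes the argument.

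Finally, for the ``In particular'' clause, $\Delta_{i,j}(w)$ is a nonempty segment of $w$ by Remark \ref{segmentsRem}(1,2) (using $x_i=x_j$), so its length is at most $\ell$. By Remark \ref{segmentsRem}(3), equality forces $i=1$, $j=\ell$, $\epsilon_1=1$, $\epsilon_\ell=-1$; combined with $x_1=x_\ell$, this means $w=x_1\cdot m\cdot x_1^{-1}$ for a reduced, nontrivial word $m$ of length $\ell-2$ (nontrivial since $w$ is reduced and nontrivial), and the identity $w\equiv 1$ on $P(H)$ immediately implies $m\equiv 1$ on $P(H)$ by cancellation of the conjugation. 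Thus $v:=m$ works in this boundary case, while $v:=\Delta_{i,j}(w)$ works otherwise, giving the required nontrivial identity of length strictly less than $\ell$.
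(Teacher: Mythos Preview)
Your proof is correct and follows essentially the same approach as the paper's: reduce to a single socle type $S^n$, lift the $\sigma_k$ to elements of $H$, expand $w(h_1,\ldots,h_d)=1$ coordinatewise via \cite[Lemma~2.7]{Bor17a}, and obtain a contradiction by noting that at a coordinate $t$ with $\chi_i^{-1}(t)\neq\chi_j^{-1}(t)$ the resulting universally solvable coset word equation has an $(i,j)$-split variation of $w$ as its underlying word, violating WMB. The paper's only cosmetic difference is that it names the witnessing coordinate $m_1:=\chi_i(m_0)$ for a point $m_0$ moved by $\Delta_{i,j}(w)(\vec\sigma)$, whereas you characterize such $t$ directly via $\chi_j\chi_i^{-1}(t)\neq t$; the handling of the ``In particular'' clause is likewise identical.
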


\begin{proof}
The \enquote{In particular} follows from the main statement, as by Remark \ref{segmentsRem}(1,2), $\Delta_{i,j}(w)$ is a nonempty segment of $w$, and so usually, one will simply choose $v:=\Delta_{i,j}(w)$, unless $\Delta_{i,j}(w)=w$, which by Remark \ref{segmentsRem}(3) can only happen if $w=xvx^{-1}$ with $v\in\F(X_1,\ldots,X_d)\setminus\{1\}$ is not cyclically reduced, in which case $H$ and thus $P(H)$ satisfies the identity $v\equiv 1$. We thus focus on the proof of the main statement now.

Say $\Soc(H)=S_1^{n_1}\times\cdots\times S_r^{n_r}$ where $S_1,\ldots,S_r$ are pairwise nonisomorphic nonabelian finite simple groups and $n_1,\ldots,n_r\in\IN^+$. Then $H$ is a subdirect product of semisimple groups $H_k$, $k=1,\ldots,r$, such that $\Soc(H_k)=S_k^{n_k}$ for each $k$, and such that $P(H)$ is a subdirect product of the permutation parts $P(H_k)$, for $k=1,\ldots,r$. Hence it suffices to show that each $P(H_k)$ satisfies the identity $\Delta_{i,j}(w)\equiv 1$. This shows that we may assume w.l.o.g.~that $\Soc(H)=S^n$ for some nonabelian finite simple group $S$ and some $n\in\IN^+$.

Aiming for a contradiction, we will also assume that $P(H)$ does \emph{not} satisfy $\Delta_{i,j}(w)\equiv 1$. Then we can fix $\sigma_1,\ldots,\sigma_d\in P(H)$ with $\Delta_{i,j}(w)(\sigma_1,\ldots,\sigma_d)\not=\id$. Moreover, we fix $m_0\in\{1,\ldots,n\}$ with $\Delta_{i,j}(w)(\sigma_1,\ldots,\sigma_d)(m_0)\not=m_0$, and set $m_1:=I_i(w)(\sigma_1,\ldots,\sigma_d)(m_0)$. Finally, we fix automorphism tuples $\vec{\alpha}_k=(\alpha_{k,1},\ldots,\alpha_{k,n})\in\Aut(S)^n$, for $k=1,\ldots,d$, such that $\vec{\alpha}_k\sigma_k\in H$.

By assumption, we have that $w_H(S^n\vec{\alpha}_1\sigma_1,\ldots,S^n\vec{\alpha}_d\sigma_d)=\{1_H\}$. In particular, letting $s_{k,m}$, for $k=1,\ldots,d$ and $m=1,\ldots,n$, be variables ranging over $S$, then by \cite[Lemma 2.7]{Bor17a}, we have that a certain system of $n$ coset word equations over $S$ in the variables $s_{k,m}$ is universally solvable, and setting $\chi_t:=I_t(w)(\sigma_1,\ldots,\sigma_d)$ for $t=1,\ldots,l$ and denoting by $\iota$ the unique function $\{1,\ldots,\ell\}\rightarrow\{1,\ldots,d\}$ such that for $t=1,\ldots,\ell$, $x_t=X_{\iota(t)}$, one of the equations from the system is
\begin{equation}\label{m1Eq}
(s_{\iota(1),\chi_1^{-1}(m_1)}\alpha_{\iota(1),\chi_1^{-1}(m_1)})^{\epsilon_1}\cdots(s_{\iota(\ell),\chi_{\ell}^{-1}(m_1)}\alpha_{\iota(\ell),\chi_{\ell}^{-1}(m_1)})^{\epsilon_{\ell}}=1.
\end{equation}
Note that by assumption, $\iota(i)=\iota(j)$, but also
\begin{align*}
\chi_j^{-1}(m_1)&=(I_j(w)(\sigma_1,\ldots,\sigma_d))^{-1}(m_1) \\
&=(\Delta_{i,j}(w)(\sigma_1,\ldots,\sigma_d)^{-1}\cdot I_i(w)(\sigma_1,\ldots,\sigma_d)^{-1})(m_1) \\
&=\Delta_{i,j}(w)(\sigma_1,\ldots,\sigma_d)^{-1}(m_0)\not=m_0=\chi_i^{-1}(m_1).
\end{align*}
Hence Equation (\ref{m1Eq}) is a universally solvable coset word equation over $S$ with respect to some $(i,j)$-split variation $w'$ of $w$. But by assumption, $S$ does not satisfy any coset identity with respect to $w'$, which is the desired contradiction.
\end{proof}

\subsection{A consequence of Theorem \ref{mainTheo2}}\label{subsec5P2}

Using Theorem \ref{mainTheo2}, we can show the following, which will be used in the proof of Corollary \ref{mainCor2}:

\begin{propposition}\label{segmentProp}
Let $w\in\F(X_1,\ldots,X_d)$, with notation as in Notation \ref{segmentsNot}. Also, assume that for some given $k\in\{1,\ldots,d\}$, $\mu_w(X_k)\leq3$. Finally, let $i,j\in\{1,\ldots,\ell\}$ with $i<j$ such that $x_i=x_j=X_k$. Then if a finite semisimple group $H$ satisfies the identity $w\equiv1$, $P(H)$ satisfies $\Delta_{i,j}(w)\equiv1$; in particular, $P(H)$ satisfies a nontrivial identity of length strictly shorter than $\ell$ then.
\end{propposition}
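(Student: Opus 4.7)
The plan is to invoke Theorem \ref{mainTheo2} directly, so the task reduces to verifying that, under the hypothesis $\mu_w(X_k)\leq 3$, every $(i,j)$-split variation $w'=y_1^{\epsilon_1}\cdots y_\ell^{\epsilon_\ell}$ of $w$ is WMB. The key combinatorial observation is that every such $w'$ must contain a variable of multiplicity exactly $1$. Indeed, since a variation only regroups occurrences of a single original variable, the new variables $y_i$ and $y_j$ can appear in $w'$ only at positions $m$ with $x_m=X_k$; by hypothesis at most three such positions exist, namely $i$, $j$, and at most one further position $h$. If $\mu_w(X_k)=2$, then $y_i$ and $y_j$ each have multiplicity $1$ in $w'$. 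If $\mu_w(X_k)=3$, then since the split condition forces $y_i\neq y_j$, the single variable $y_h$ can coincide with at most one of them, so at least one of $y_i,y_j$ still has multiplicity $1$ in $w'$.

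Next I would establish the easy general fact that any nontrivial reduced word $v$ containing a variable $Y$ of multiplicity $1$ is WMB. Writing $v=AY^{\pm 1}B$ with $A,B$ not involving $Y$, let $S$ be any nonabelian finite simple group and fix any automorphism coset representatives for the variables of $v$, say $\beta$ for $Y$ and others for the remaining variables. Freezing those other variables to arbitrary elements of their respective cosets, $A$ and $B$ evaluate to fixed elements $a,b\in\Aut(S)$; as $Y$ ranges over the coset $S\beta$, the element $a Y^{\pm 1} b$ takes $|S|\geq 60$ distinct values. Hence $v_{\Aut(S)}$ cannot be constant on the Cartesian product of cosets, so $v$ is not a coset identity over $S$; as $S$ was arbitrary, $v$ is WMB.

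Combining these two steps, every $(i,j)$-split variation of $w$ is WMB, and Theorem \ref{mainTheo2} yields that $P(H)$ satisfies $\Delta_{i,j}(w)\equiv 1$. For the ``in particular'' clause, Remark \ref{segmentsRem}(2) ensures $\Delta_{i,j}(w)$ is nonempty (since $x_i=x_j$), and the only way to have $\Delta_{i,j}(w)=w$ itself is, by Remark \ref{segmentsRem}(3), that $w=X_k v' X_k^{-1}$ for some nontrivial reduced $v'$ of length $\ell-2<\ell$; in that case $H$, and therefore $P(H)$, already satisfies the strictly shorter identity $v'\equiv 1$. I do not anticipate a serious obstacle here: the proof is a short combinatorial case analysis combined with the standard fact that a variable of multiplicity one prevents a word from being constant on a coset product, and the rest is a mechanical appeal to Theorem \ref{mainTheo2}.
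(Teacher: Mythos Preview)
Your proposal is correct and follows essentially the same approach as the paper: verify that every $(i,j)$-split variation of $w$ has a variable of multiplicity $1$ (the paper phrases this as $\mu_w(X_k)\leq 3<2\cdot 2$), conclude that such a variation is WMB, and apply Theorem~\ref{mainTheo2}. The only cosmetic difference is that the paper cites \cite[Proposition~3.1(1)]{Bor17a} to get VSMB (hence WMB) from the multiplicity-$1$ variable, whereas you supply the elementary one-line argument for WMB directly; and the paper simply refers to the proof of Theorem~\ref{mainTheo2} for the ``in particular'' clause, which you spell out.
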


\begin{proof}
The proof of the \enquote{In particular} is as for Theorem \ref{mainTheo2}. For the main statement: Since $\mu_w(X_k)\leq 3<2\cdot 2$, in each $(i,j)$-split variation $w'$ of $w$, there will be a variable that occurs with multiplicity exactly $1$. Hence $w'$ is VSMB, in particular WMB, by \cite[Proposition 3.1(1)]{Bor17a}.
\end{proof}

\subsection{Proof of Corollary \ref{mainCor2}}\label{subsec5P3}

By \cite[Theorem 1.1.2(3)]{Bor17a} and Corollary \ref{mainCor1}, it suffices to consider words $w$ of lengths $9$, $10$ or $11$. We start with the length $9$ case. Then the existence of $L_w$ (actually, with $L_w=0$) is clear if $w$ is a power of single variable. So we may also assume that $w$ contains at least two distinct variables. But if the total number of variables occurring in $w$ is at least $3$, then since $9<3\cdot 4$, there is a variable occurring with multiplicity at most $3$ in $w$. Hence by Proposition \ref{segmentProp}, $P(H_1(G))$ satisfies an identity $v\equiv 1$ for some word $v$ of length at most $8$. By Corollary \ref{mainCor1}, $v$ is NLB, and so $P(H_1(G))$ satisfying $v\equiv1$ entails that $\lambda(P(H_1(G)))$ (and thus $\lambda(G)$) is bounded from above by some constant, as required.

So we may henceforth assume that $w=w(x,y)$ is a two-variable word, and moreover (by an argument as in the previous paragraph, using Proposition \ref{segmentProp}), we may assume that each variable that occurs in $w$ does so with multiplicity at least $4$. Since $9<2\cdot 5$, one of the two variables, say w.l.o.g.~$x$, occurs with multiplicity exactly $4$ in $w$. Using the notation of Notation \ref{segmentsNot} for $w$ (with $l=9$, of course), fix a pair $(i,j)$ with $1\leq i<j\leq 9$ and $x_i=x_j=x$.

We will now argue that each $(i,j)$-split variation $w'$ of $w$ is WMB. Since $\mu_w(x)=4<3\cdot 2$, at least one of the variables in $w'$ derived from $x$, say $x'$, must occur with multiplicity at most $2$. If $\mu_{w'}(x')=1$, $w'$ is VSMB, in particular WMB, by \cite[Proposition 3.1(1)]{Bor17a}. So assume that $\mu_{w'}(x')=2$. The segment between the two occurrences of $(x')^{\pm1}$ in $w'$ is of length at most $7$, and thus it is VSMB by \cite[Theorem 1.1.2(3)]{Bor17a}. In view of this and \cite[Proposition 3.1(2,3)]{Bor17a}, $w'$ is VSMB, in particular WMB.

An application of Theorem \ref{mainTheo2} now yields that $P(H_1(G))$ satisfies an identity of the form $v\equiv1$ where $v$ is a word of length at most $8$. Again, by Corollary \ref{mainCor1}, $v$ is NLB, and so $\lambda(P(H_1(G)))$ is bounded from above by some constant.

The arguments for words of length $\ell\in\{10,11\}$ are largely similar, so we only sketch them. The first paragraph of the above argument can almost literally be carried over, replacing $9$ by $\ell$, of course, and not only referring to Corollary \ref{mainCor1} at the end, but also to the cases of length $9$ resp.~lengths $9$ and $10$ already done by then. In the two-variable case $w=w(x,y)$ with $\mu_w(x),\mu_w(y)\geq4$, sine $\ell<2\cdot 6$, we get that one of the two variables, say w.l.o.g.~$x$, occurs with multiplicity $4$ or $5$ in $w$. When choosing the pair $(i,j)$ with $1\leq i<j\leq\ell$ with $x_i=x_j=x$, one must also choose it such that the difference $j-i$ is maximal among all such pairs. This way, in the third paragraph of the argument, it is ensured that the segment $s$ between the two occurrences of $x'$ in $w'$ is of length at most $\ell-3$ (not just $\ell-2$, as in the argument for length $9$ words). For $\ell=10$, one can then conclude as in the length $9$ case, and for $\ell=11$, one needs the additional observation that $s$ cannot be an $8$-th or $(-8)$-th power of a single variable, for then some variable (necessarily $y$) occurs in $w$ with multiplicity at least $8$, so that $\mu_w(x)\leq3$, a contradiction.

\section{Power words}\label{sec6}

\subsection{Identities}\label{subsec6P1}

It is clear by a result of Segal \cite[Theorem 10]{Seg18a} that for each positive integer $e$, if a finite group $G$ satisfies the identity $x^e\equiv1$ (in other words, if $\exp(G)\mid e$), then $\lambda(G)$ is bounded from above in terms of $e$ (actually, Segal's result says that the same holds true if $\lambda(G)$ is replaced by the generalized Fitting height of $G$, which is an upper bound on $\lambda(G)$). Now Segal's proof uses the following, which is based on \cite[proof of Theorem 4.4.1]{HH56a} and the Feit-Thompson theorem:

\begin{lemmma}\label{hhLem}
Let $x$ be a variable, let $e\in\IN^+$, and let $H$ be a nontrivial finite semisimple group satisfying the identity $x^e\equiv1$ (in particular, $e$ is even). Then $P(H)$ satisfies the identity $x^{e/2}\equiv1$.\qed
\end{lemmma}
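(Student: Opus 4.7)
The plan is to argue by contradiction: assume some $\sigma\in P(H)$ has $\ord(\sigma)\nmid e/2$, and derive a contradiction with $\exp(H)\mid e$.

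\emph{Reduction.} By projecting $H$ onto each component $\Aut(S_i)\wr\Sym(n_i)$ of its natural embedding, I may assume $\Soc(H)=S^n$ for a single nonabelian finite simple group $S$. Since $\ord(\sigma)\mid e$ but $\ord(\sigma)\nmid e/2$, we have $\nu_2(\ord(\sigma))=\nu_2(e)=:a$. Replacing $\sigma$ by $\sigma^{\ord(\sigma)/2^a}$ and then projecting $H$ onto the $S^{2^a}$-block over a single $2^a$-cycle of the resulting power, I reduce to the situation
\[\Soc(H)=S^k\trianglelefteq H\leq\Aut(S)\wr\Sym(k),\quad k=2^a,\quad\sigma=(1\,2\,\cdots\,k)\in P(H).\]

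\emph{Wreath-product calculation.} For any lift $g=(\alpha_1,\ldots,\alpha_k)\sigma\in H$ of $\sigma$, an induction in the wreath product gives $g^k=(\gamma_1,\ldots,\gamma_k)$ with the $\gamma_j$ pairwise $\Aut(S)$-conjugate and $\gamma_1=\alpha_1\alpha_k\alpha_{k-1}\cdots\alpha_2$, hence $\ord(g)=k\cdot\ord(\gamma_1)$. The hypothesis $\exp(H)\mid e=km$, with $m$ the odd part of $e$, therefore forces $\ord(\gamma_1)\mid m$, i.e., $\gamma_1$ has odd order. Moreover, left-multiplying $g$ by $(s_1,\ldots,s_k)\in S^k=\Soc(H)$ produces further lifts of $\sigma$; using the normality of $S=\Inn(S)$ in $\Aut(S)$ to push each $s_i$ past the $\alpha_j$, the new $\gamma_1$ takes the form $s\cdot\gamma_1$ for some $s\in S$, and varying only $s_1$ while fixing $s_2=\cdots=s_k=1$ already makes $s$ run through all of $S$. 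Hence $\gamma_1$ ranges over the entire coset $S\gamma_0\subseteq\Aut(S)$ (where $\gamma_0$ denotes a fixed representative), and \emph{every} element of this coset must have odd order in $\Aut(S)$.

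\emph{Feit-Thompson step (main obstacle).} The proof concludes by showing that no coset of $\Inn(S)=S$ in $\Aut(S)$ can consist entirely of odd-order elements---this is where I expect the real difficulty to lie. Feit-Thompson enters essentially: $|S|$ is even, so $S$ contains involutions, and these involutions generate $S$ (the subgroup they generate is nontrivial and normal in the simple group $S$). A natural approach is via the twisted norm $N\colon S\to S$, $N(s):=s\cdot\gamma_0(s)\cdots\gamma_0^{r-1}(s)$, where $r:=\ord(\bar\gamma_0)\in\Out(S)$ is odd under the contradiction hypothesis: the identity $(s\gamma_0)^r=N(s)\gamma_0^r$ shows that $N(s)\gamma_0^r$ would have to be of odd order for every $s\in S$, so if $N$ were surjective one would conclude that every element of $S=S\gamma_0^r$ is of odd order, absurd. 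In general one selects $\gamma_0$ (via the Frattini argument applied to $\langle S,\gamma_0\rangle$) to normalize a Sylow $2$-subgroup $P$ of $S$, and then exploits the odd-order action of $\gamma_0$ on $P$ together with its action on the $S$-conjugacy class of involutions to produce an involution $t\in S$ that commutes with $\gamma_0$ (or, more generally, pairs with it to give an element of even order), whence $t\gamma_0\in S\gamma_0$ has even order, the desired contradiction.
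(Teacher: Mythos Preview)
Your reduction and wreath-product calculation (what you call steps 1--5) are correct and match exactly how the paper handles this: the paper does not prove Lemma~\ref{hhLem} directly but cites Hall--Higman together with Feit--Thompson, and then proves the stronger Lemma~\ref{gcdBadLem} by precisely the computation you carry out. There it is shown that if $\sigma$ has a cycle of length $\ell$, then the $e$-th power identity in $H$ forces the coset identity $(S\gamma)^{e/\ell}=\{1\}$ over $S$; one then quotes \cite[Corollary~5.2]{Bor17a} (equivalently, via Lemma~\ref{nonConstantLem}, Nikolov's \cite[Proposition~10]{Nik16a}) to conclude that $e/\ell$ must be even, hence $\ell\mid e/2$. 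Your version specialises to the extremal cycle length $\ell=2^a$ and reaches the same coset identity $(S\gamma_0)^m=\{1\}$ with $m$ odd. One cosmetic point: there is no genuine ``projection of $H$'' onto the block of a single cycle (that cycle need not be $P(H)$-invariant), but you never actually use such a projection---working with the chosen lift and its socle-translates inside the original $H$, and then reading off one coordinate of $g^e$, is all that is needed, just as in the paper's proof of Lemma~\ref{gcdBadLem}.

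The genuine gap is your ``Feit--Thompson step''. You need the fact that, for a nonabelian finite simple $S$, no coset $S\gamma_0$ in $\Aut(S)$ can consist entirely of odd-order elements (equivalently: $x^m$ is never a coset identity over $S$ for odd $m$). The paper does not attempt to prove this inline; it uses it as a black box via \cite[Corollary~5.2]{Bor17a}, and the lemma itself is credited to Hall--Higman plus Feit--Thompson. Your two proposed arguments do not close the gap:
\begin{itemize}
\item The twisted norm $N\colon S\to S$, $N(s)=s\cdot\gamma_0(s)\cdots\gamma_0^{r-1}(s)$, is in general far from surjective, so the ``if $N$ were surjective'' branch does not apply.
\item After using Frattini to arrange that $\gamma_0$ normalises a Sylow $2$-subgroup $P$ of $S$, an odd-order automorphism need \emph{not} centralise any involution of $P$: already a $3$-cycle acting on $V_4$ fixes no nontrivial element, and in that toy model every element of $V_4\gamma_0$ has order $3$. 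So ``produce an involution $t$ commuting with $\gamma_0$'' requires more than the odd-order action on $P$; one would have to show that $\gamma_0$ fixes some $S$-conjugacy class of involutions, and you do not supply an argument for this.
\end{itemize}
In short, your outline correctly isolates the crux, but the justification you sketch for it is incomplete; the paper sidesteps this by citation rather than by an elementary argument.
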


The aim of this subsection is two-fold: Firstly, to show a slightly stronger variant of Lemma \ref{hhLem} (see Lemma \ref{gcdBadLem} below), and secondly, to use a Segal-like argument for gaining a simple explicit upper bound on the nonsolvable length $\lambda(G)$ in terms of $\exp(G)$ (see Proposition \ref{powerIdLambdaProp} below).

Let us start with the stronger version of Lemma \ref{hhLem}, for which we introduce the following:

\begin{deffinition}\label{goodBadDef}
Let $x$ be any fixed variable. Call a positive integer $e$ \emph{good} if and only if the word $x^e$ is MB, and otherwise, call $e$ \emph{bad}. Moreover, for fixed $e\in\IN^+$, denote by $\BAD(e)$ the set of all positive divisors of $e$ that are bad.
\end{deffinition}

\begin{lemmma}\label{gcdBadLem}
Let $x$ be a variable, let $e\in\IN^+$, and let $H$ be a nontrivial finite semisimple group satisfying the identity $x^e\equiv1$ (in particular, $e$ is bad). Then $P(H)$ satisfies the identity $x^{e/\gcd(\BAD(e))}\equiv1$.
\end{lemmma}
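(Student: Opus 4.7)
\emph{Proof plan.} Set $g:=\gcd(\BAD(e))$; the aim is to show that every $\pi\in P(H)$ has order dividing $e/g$ by refining the Hall-Higman style cycle argument that proves Lemma \ref{hhLem}.

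First, I would reduce to a single isotypic component of the socle. Writing $\Soc(H)=S_1^{n_1}\times\cdots\times S_r^{n_r}$ and letting $H_i$ denote the image of $H$ in $\Aut(S_i)\wr\Sym(n_i)$, one checks that $P(H)$ embeds diagonally into $\prod_i P(H_i)$, so it suffices to prove that $P(H_i)$ satisfies $x^{e/g}\equiv 1$ for each $i$. Fix $i$ and, abusing notation, write $S:=S_i$, $n:=n_i$, $H:=H_i$, $P:=P(H_i)$; one still has $\exp(H)\mid e$.

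Next comes the cycle lift. Let $\pi\in P$ and let $(j_1,\ldots,j_c)$ be a cycle of $\pi$ of length $c$. Choose any lift $\tilde\pi=(\alpha_1,\ldots,\alpha_n)\pi\in H$ of $\pi$. A direct wreath-product calculation shows that $\tilde\pi^c$ acts trivially as a permutation on the cycle and that its $j_1$-component equals $\beta_0:=\alpha_{j_1}\alpha_{j_c}\alpha_{j_{c-1}}\cdots\alpha_{j_2}\in\Aut(S)$. Replacing $\tilde\pi$ by $\vec s\cdot\tilde\pi$ with $\vec s\in\Soc(H)=S^n\subseteq H$, and using the identity $\alpha s=\alpha(s)\alpha$ to move $S$-factors past $\Aut(S)$-factors, one sees that as the lift varies, this $j_1$-component sweeps out the entire right coset $S\beta_0\subseteq\Aut(S)$. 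Since each such lift satisfies $\tilde\pi^e=1$, we obtain $(s\beta_0)^{e/c}=1$ for every $s\in S$; equivalently, $x^{e/c}$ is a coset identity over $S$ in the sense of Definition \ref{veryStronglyMBDef}(2). Hence $x^{e/c}$ is not WMB, and since MB implies WMB, $x^{e/c}$ is not MB either, so $e/c$ is bad. Combined with $c\mid\ord(\pi)\mid\exp(P)\mid e$, this gives $e/c\in\BAD(e)$.

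To finish, $g\mid e/c$ by definition of $g$, equivalently $c\mid e/g$; taking the lcm over all cycle lengths of $\pi$ then gives $\ord(\pi)\mid e/g$, and running this for every $\pi\in P$ and every $i$ proves the lemma. The step requiring most care is the coset claim — that the $j_1$-component of $\tilde\pi^c$ genuinely realises the whole of $S\beta_0$ as the lift varies — which is the standard Hall-Higman manipulation but must be recorded carefully in the setting where $H$ may be a proper subgroup of $\Aut(S)\wr\Sym(n)$; since all modifications are carried out within $\Soc(H)\subseteq H$, one never leaves $H$, so the argument goes through.
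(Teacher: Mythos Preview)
Your proposal is correct and follows essentially the same route as the paper: reduce to a single isotypic socle factor, fix a cycle of length $c$ of an element of $P(H)$, compute the relevant coordinate of the $e$-th power of a lift to see that $(S\beta_0)^{e/c}=\{1\}$, conclude that $x^{e/c}$ is a coset identity over $S$ and hence $e/c\in\BAD(e)$, and finish via $\ord(\pi)=\lcm(c)\mid e/\gcd(\BAD(e))$. The only cosmetic differences are that you factor the computation as $(\tilde\pi^c)^{e/c}$ whereas the paper writes out the $i_\ell$-th entry of $\tilde\pi^e$ directly, and you deduce ``coset identity $\Rightarrow$ not MB'' via the chain MB $\Rightarrow$ WMB while the paper cites \cite[Proposition 2.9(3)]{Bor17a} for the same fact.
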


\begin{proof}
We may w.l.o.g.~assume that $S^n\leq H\leq\Aut(S^n)$ for some nonabelian finite simple group $S$ and some $n\in\IN^+$ (as $H$ is, in general, a subdirect product of such groups, and likewise, $P(H)$ is a subdirect product of the permutation parts of those groups). Fix $\sigma\in P(H)$. We will show that $\sigma$ can only have cycles of lengths of the form $\frac{e}{d}$ where $d\in\BAD(e)$, and once we will have shown that, we will be done, as this implies that $\ord(\sigma)\mid\lcm_{d\in\BAD(e)}{\frac{e}{d}}=\frac{e}{\gcd(\BAD(e))}$.

So let $\zeta=(i_1,\ldots,i_{\ell})$ be a length $\ell$ cycle of $\sigma$. Note firstly that $\ell\mid e$, since $P(H)$, being a quotient of $H$, also satisfies the identity $x^e\equiv1$. Now fix $(\alpha_1,\ldots,\alpha_n)\in\Aut(S)^n$ such that $(\alpha_1,\ldots,\alpha_n)\sigma\in H$. It follows that for all $s_1,\ldots,s_n\in S$,
\[
((s_1\alpha_1,\ldots,s_n\alpha_n)\sigma)^e=1,
\]
and the expression on the left-hand side can be written as an element of $\Aut(S)^n$ whose $i_{\ell}$-th entry is
\[
(s_{i_{\ell}}\alpha_{i_{\ell}}s_{i_{\ell-1}}\alpha_{i_{\ell-1}}\cdots s_{i_1}\alpha_{i_1})^{e/\ell},
\]
which must in particular also be $1$ for all choices of $s_{i_1},\ldots,s_{i_{\ell}}\in S$. This shows that $S$ satisfies a coset identity with respect to $x^{e/\ell}$, and so $e/\ell$ is bad by \cite[Proposition 2.9(3)]{Bor17a}, i.e., $\ell=\frac{e}{d}$ for some $d\in\BAD(e)$, as required.
\end{proof}

Note that by \cite[Corollary 5.2]{Bor17a}, all bad positive integers are even, and so in Lemma \ref{gcdBadLem}, $\gcd(\BAD(e))\geq2$, whence Lemma \ref{gcdBadLem} does imply Lemma \ref{hhLem}, as asserted above. While it is true that the greatest common divisor of all bad positive integers is $2$ (since, for example, $8$ and $18$ are bad by \cite[Theorem 1.1.2(1)]{Bor17a}), and thus that Lemma \ref{gcdBadLem} does not always provide strictly stronger information than Lemma \ref{hhLem}, in some cases, it is better. As a somewhat extreme example, note that $\BAD(30)=\{30\}$ by \cite[Theorem 1.1.2(1)]{Bor17a}, and so by Lemma \ref{gcdBadLem}, $H$ satisfying the identity $x^{30}\equiv1$ implies that $P(H)$ is trivial (as opposed to it just satisfying the identity $x^{15}\equiv1$, which is what Lemma \ref{hhLem} gives).

Using the bound from Lemma \ref{hhLem}, we will now show, similarly to Segal's proof of \cite[Theorem 10]{Seg18a}:

\begin{propposition}\label{powerIdLambdaProp}
For every finite group $G$, $\lambda(G)\leq\nu_2(\exp(G))$.
\end{propposition}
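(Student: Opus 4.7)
The plan is to prove this by induction on $\nu_2(\exp(G))$, using Lemma \ref{hhLem} to peel off one nonsolvable layer at a time while halving the relevant 2-part of the exponent.

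For the base case $\nu_2(\exp(G))=0$, the exponent of $G$ is odd, so $|G|$ is odd, and the Feit--Thompson theorem yields that $G$ is solvable, giving $\lambda(G)=0$. For the inductive step, assume the statement holds for all finite groups whose exponent has strictly smaller 2-adic valuation, and let $G$ satisfy $\nu_2(\exp(G))=a\geq 1$. If $G$ is solvable, we are done. Otherwise $H_1(G)$ is a nontrivial finite semisimple group, and Feit--Thompson forces $\exp(H_1(G))$ to be even, so Lemma \ref{hhLem} applies.

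The key step is to translate a bound on the exponent of $P(H_1(G))$ into one on $\exp(H_2(G))$, using Lemma \ref{ppbNlbLem}(1). Writing $\Soc(H_1(G))=S_1^{n_1}\times\cdots\times S_r^{n_r}$, each projection $H_{1,i}$ of $H_1(G)$ to $\Aut(S_i)\wr\Sym(n_i)$ is a quotient of $H_1(G)$, hence still satisfies $x^{\exp(H_1(G))}\equiv 1$; Lemma \ref{hhLem} then gives that each $P(H_{1,i})$, and hence every member of $\Perm(H_1(G))$, satisfies $x^{\exp(H_1(G))/2}\equiv 1$. By Lemma \ref{ppbNlbLem}(1), $H_2(G)$ is a section of $\prod_{P\in\Perm(H_1(G))}P$, so
\[
\exp(H_2(G))\ \Bigm|\ \exp(H_1(G))/2\ \Bigm|\ \exp(G)/2,
\]
whence $\nu_2(\exp(H_2(G)))\leq\nu_2(\exp(G))-1$.

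Finally, observe that by the recursive definition $G_{k+1}(G)=H_k(G)/T_k(G)$ and the identifications in Definition \ref{lambdaDef}, the characteristic sections $T_k(H_2(G))$ coincide with $T_{k+1}(G)$ for all $k\geq 1$, so $\lambda(H_2(G))=\lambda(G)-1$. Applying the induction hypothesis to $H_2(G)$ gives $\lambda(G)-1=\lambda(H_2(G))\leq\nu_2(\exp(H_2(G)))\leq\nu_2(\exp(G))-1$, i.e.\ $\lambda(G)\leq\nu_2(\exp(G))$, as required. The only conceptual point to verify carefully is the identification $\lambda(H_2(G))=\lambda(G)-1$, which is a direct unpacking of Definition \ref{lambdaDef}; everything else is a routine combination of Lemma \ref{hhLem}, Lemma \ref{ppbNlbLem}(1), and Feit--Thompson.
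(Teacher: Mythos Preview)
Your proof is correct and follows essentially the same strategy as the paper's: induction on $\nu_2(\exp(G))$, Feit--Thompson for the base case, and Lemma~\ref{hhLem} to drop the $2$-adic valuation by one in the inductive step. The only difference is in packaging: the paper applies the induction hypothesis directly to $P(H_1(G))$, using (without further comment) that $\lambda(P(H_1(G)))=\lambda(G)-1$, whereas you route through $H_2(G)$ via Lemma~\ref{ppbNlbLem}(1) and the individual permutation parts $P(H_{1,i})$. Your version makes the justification of the ``$\lambda$ drops by one'' step more explicit, at the cost of a little extra bookkeeping; the paper's version is terser but relies on the reader unpacking why $P(H_1(G))$ and $G$ differ in nonsolvable length by exactly one (which amounts to the same observation you spell out for $H_2(G)$).
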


\begin{proof}
By induction on $v:=\nu_2(\exp(G))$. If $v=0$, then $G$ is solvable by the Feit-Thompson Theorem, so $\lambda(G)=0$, and the bound is clear in that case. Now assume that $v\geq1$, and also assume that $G$ is nonsolvable (otherwise, again, $\lambda(G)=0$ and the bound is clear). Then since $G$ satisfies the identity $x^{\exp(G)}\equiv1$, so does $H_1(G)=G/\Rad(G)$. By Lemma \ref{hhLem}, it follows that $P(H_1(G))$ satisfies the identity $x^{\exp(G)/2}\equiv 1$, and thus, by the induction hypothesis,
\[
\lambda(G)-1=\lambda(P(H_1(G)))\leq\nu_2(\exp(G)/2)=\nu_2(\exp(G))-1,
\]
which yields the desired bound $\lambda(G)\leq\nu_2(\exp(G))$.
\end{proof}

\subsection{Probabilistic identities}\label{subsec6P2}

In this subsection, we are concerned with the proof of Theorem \ref{newMBTheo}. It relies on the following two lemmas of some independent interest:

\begin{lemmma}\label{nonConstantLem}
Let $w\in\F(X_1,\ldots,X_d)$, let $S$ be a nonabelian finite simple group, and let $\alpha_1,\ldots,\alpha_d\in\Aut(S)$. The following are equivalent:
\begin{enumerate}
\item $w(S\alpha_1,\ldots,S\alpha_d)\not=\{w(\alpha_1,\ldots,\alpha_d)\}$.
\item $w(S\alpha_1,\ldots,S\alpha_d)\not=\{1\}$.
\end{enumerate}
\end{lemmma}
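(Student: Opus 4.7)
The implication (1)$\Rightarrow$(2) is immediate from its contrapositive. Since $(\alpha_1,\ldots,\alpha_d)\in\prod_i S\alpha_i$, the element $w(\alpha_1,\ldots,\alpha_d)$ always lies in $w(S\alpha_1,\ldots,S\alpha_d)$, so if the latter set equals $\{1\}$, then in particular $w(\alpha_1,\ldots,\alpha_d)=1$ and hence $\{w(\alpha_1,\ldots,\alpha_d)\}=\{1\}=w(S\alpha_1,\ldots,S\alpha_d)$. The real content of the lemma is therefore the contrapositive of (2)$\Rightarrow$(1), namely: if $w(S\alpha_1,\ldots,S\alpha_d)$ is a singleton (which by the above must then be $\{c\}$ with $c=w(\alpha_1,\ldots,\alpha_d)$), then $c=1$.

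My plan for this direction rests on the observation that the set $w(S\alpha_1,\ldots,S\alpha_d)\subseteq\Aut(S)$ is closed under conjugation by elements of $S$. Indeed, identifying $S$ with $\Inn(S)$, $S$ is a normal subgroup of $\Aut(S)$, so for any $t\in S$ and any $\alpha\in\Aut(S)$ the conjugate $t\alpha t^{-1}$ projects to $\bar{\alpha}$ in $\Out(S)=\Aut(S)/S$ and hence lies in $S\alpha$. Consequently, if $g_i\in S\alpha_i$ then $tg_it^{-1}\in S\alpha_i$ as well, and since conjugation is an automorphism of $\Aut(S)$ we have
\[
tw(g_1,\ldots,g_d)t^{-1}=w(tg_1t^{-1},\ldots,tg_dt^{-1})\in w(S\alpha_1,\ldots,S\alpha_d).
\]

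Under the assumption that $w(S\alpha_1,\ldots,S\alpha_d)=\{c\}$, the above forces $tct^{-1}=c$ for every $t\in S$, i.e.\ $c\in C_{\Aut(S)}(S)$. For a nonabelian finite simple group $S$ one has $Z(S)=1$, and the standard fact that $C_{\Aut(H)}(\Inn(H))$ is trivial whenever $Z(H)=1$ (if $\phi\in\Aut(H)$ centralises every inner automorphism, then $\iota_{\phi(h)}=\phi\iota_h\phi^{-1}=\iota_h$, whence $\phi(h)h^{-1}\in Z(H)=1$ for all $h$, so $\phi=\id$) applies to give $C_{\Aut(S)}(\Inn(S))=1$. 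Hence $c=1$, as required. I do not anticipate any real obstacle here: the argument is essentially the one-line $S$-conjugation-invariance observation, combined with the triviality of the centraliser of $\Inn(S)$ in $\Aut(S)$ for nonabelian finite simple $S$.
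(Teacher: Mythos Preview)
Your proof is correct and follows essentially the same approach as the paper. Both arguments hinge on the observation that the image set $w(S\alpha_1,\ldots,S\alpha_d)$ is stable under conjugation by elements of $S$, combined with the fact that $C_{\Aut(S)}(S)=1$ for nonabelian finite simple $S$; the only cosmetic difference is that the paper proves (2)$\Rightarrow$(1) directly (picking a nontrivial $\beta$ in the set and exhibiting a distinct $S$-conjugate $\beta^s$), whereas you argue the contrapositive (a singleton value must be centralised by $S$, hence trivial).
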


We note that by Lemma \ref{nonConstantLem}, \cite[Corollary 5.2]{Bor17a} may be viewed as a direct consequence of Nikolov's earlier result \cite[Proposition 10]{Nik16a}.

\begin{lemmma}\label{psl23fLem}
Let $f\in\IN^+$ be odd, let $S=\PSL_2(3^f)$, and let $\alpha$ be an automorphism of $S$ with nontrivial diagonal part and whose field part is of order $f$. Then $\exp(S\alpha)=4f$.
\end{lemmma}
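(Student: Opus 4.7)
The plan is to identify $\Aut(S)$ with $\PGL_2(q) \rtimes \langle \phi \rangle$, where $q = 3^f$ and $\phi$ denotes the standard Frobenius of order $f$, and to write $\alpha = g\phi$ with $g \in \PGL_2(q) \setminus \PSL_2(q)$. A quick check shows that the coset $S\alpha$ depends only on the non-triviality of the diagonal part (for fixed $\phi$): if $g, g'$ both lie in $\PGL_2(q) \setminus \PSL_2(q)$, their ratio has square determinant and thus lies in $\PSL_2(q)$, so $g\phi$ and $g'\phi$ represent the same coset. Hence I would normalize $g = \operatorname{diag}(-1,1) \in \PGL_2(\IF_3)$. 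Computing inside the semidirect product gives $(s\alpha)^f = N(sg)$ where $N(h) := h \cdot \phi(h) \cdot \phi^2(h) \cdots \phi^{f-1}(h) \in \PGL_2(q)$, and since the field part of $(s\alpha)^k$ is $\phi^k$, the equality $(s\alpha)^k = 1$ forces $f \mid k$; thus $\ord(s\alpha) = f \cdot \ord(N(sg))$. The task therefore reduces to proving $\lcm\{\ord(N(sg)) : s \in S\} = 4$.

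For the upper bound, I would first compute $\det(N(sg)) = (-1)^f = -1$ (using $f$ odd), which is a non-square in $\IF_q^{\ast}$ because $q \equiv 3 \pmod 4$, so $N(sg) \in \PGL_2(q) \setminus \PSL_2(q)$. A direct telescoping check gives $\phi(N(sg)) = (sg)^{-1} N(sg) (sg)$, hence $N(sg)$ is $\PGL_2(q)$-conjugate to its Galois twist. Therefore the $\PGL_2(q)$-conjugacy invariant $\operatorname{tr}^2/\det$ of $N(sg)$ lies in the $\phi$-fixed subfield $\IF_3$; combined with $\det = -1$, this forces $\operatorname{tr}(N(sg))^2 \in \{0, 1, -1\}$. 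But $-1$ is a non-square in $\IF_q$, excluding $\operatorname{tr}^2 = -1$. The remaining cases $\operatorname{tr}^2 \in \{0, 1\}$ correspond to eigenvalue ratios $-1$ and primitive $4$th roots of unity, giving $N(sg)$ of order $2$ or $4$ respectively, so $\exp(S\alpha) \mid 4f$.

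For the lower bound, I would exploit that $\phi$ fixes $\IF_3$ pointwise, so $\PSL_2(\IF_3) \cong \Alt(4)$ and $\PGL_2(\IF_3) \cong \Sym(4)$ embed as $\phi$-stable subgroups of $S$ and $\Aut(S)$. Since $\Sym(4) \setminus \Alt(4)$ contains $4$-cycles and $g$ already lies in $\PGL_2(\IF_3) \setminus \PSL_2(\IF_3)$, there exists $s \in \PSL_2(\IF_3)$ such that $sg \in \PGL_2(\IF_3)$ has order $4$. For this $s$, every $\phi^k$ fixes $sg$, so $N(sg) = (sg)^f$, which still has order $4$ since $\gcd(f, 4) = 1$. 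Hence $\ord(s\alpha) = 4f$, giving $\exp(S\alpha) \geq 4f$ and completing the proof. The main subtlety will be the arithmetic case analysis in the upper bound, where oddness of $f$ enters in three essential ways: $(-1)^f = -1$ determines the $\PSL_2$-coset of $N(sg)$, $-1$ is a non-square in $\IF_q^{\ast}$, and $\sqrt{-1} \notin \IF_q$, which together eliminate the spurious case $\operatorname{tr}^2 = -1$.
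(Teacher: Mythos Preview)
Your proof is correct. The lower-bound half (exhibiting an element of order $4f$ using an order-$4$ element of $\PGL_2(\IF_3)\setminus\PSL_2(\IF_3)$, which is fixed by the field automorphism and hence gives $(sg)^f$ of order $4$) matches the paper's argument essentially verbatim. The upper-bound half, however, follows a genuinely different route. The paper invokes \cite[Proposition~4.1]{Har92a} to conclude that $(s\alpha)^f$ lies, up to conjugacy in $A_1(\overline{\IF_3})$, inside a copy of $\PGL_2(3)\cong\Sym(4)$, so its order belongs to $\{1,2,3,4\}$; it then argues separately (via the odd-length product of non-$\PSL_2$ factors, exactly as you do when computing $\det$) that the order is even, leaving $\{2,4\}$. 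You replace the Hartley citation by a direct computation: the telescoping identity $\phi(N(sg))=(sg)^{-1}N(sg)(sg)$ forces the $\PGL_2$-conjugacy invariant $\operatorname{tr}^2/\det$ into the prime field $\IF_3$, and then a short arithmetic check (with $\det=-1$ and using that $-1$ is a non-square in $\IF_q$) pins the order down to $2$ or $4$ in one stroke. Your approach is self-contained and in fact recovers the relevant special case of Hartley's result: having $\operatorname{tr}^2/\det\in\IF_3$ is exactly the condition for a semisimple element of $\PGL_2(q)$ to be $\PGL_2(\overline{\IF_3})$-conjugate into $\PGL_2(3)$.

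Two minor points of exposition to tighten. First, the hypothesis only says the field part of $\alpha$ has order $f$, so it is some $\phi^t$ with $\gcd(t,f)=1$, not necessarily $\phi$ itself; the paper is explicit about this. Your argument goes through unchanged with $\phi$ replaced by any such generator (it still has order $f$, still fixes $\IF_3$, and the conjugacy identity for $N$ is identical), but you should say so. Second, when you write $\det(N(sg))=(-1)^f$, you are implicitly lifting $s$ to $\operatorname{SL}_2(q)$ and $g$ to $\operatorname{diag}(-1,1)\in\GL_2(q)$; this is legitimate but deserves a sentence.
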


Computer calculations show that the statement of Lemma \ref{psl23fLem} is also true for $f\in\{2,4\}$, so it might actually hold for all $f\in\IN^+$. Let us now prove these two lemmas before proceeding with the proof of Theorem \ref{newMBTheo}.

\begin{proof}[Proof of Lemma \ref{nonConstantLem}]
For \enquote{(1) $\Rightarrow$ (2)}: We will show the contraposition: Assume that $w(S\alpha_1,\ldots,S\alpha_d)=\{1\}$. Then, since $w(\alpha_1,\ldots,\alpha_d)\in w(S\alpha_1,\ldots,S\alpha_d)$, it follows that $w(\alpha_1,\ldots,\alpha_d)=1$, and so $w(S\alpha_1,\ldots,S\alpha_d)=\{1\}=\{w(\alpha_1,\ldots,\alpha_d)\}$, as required.

For \enquote{(2) $\Rightarrow$ (1)}: Let $\beta\in w(S\alpha_1,\ldots,S\alpha_d)\setminus\{1\}\subseteq\Aut(S)\setminus\{1\}$. Then there is an $s\in S$ with $\beta^s\not=\beta$. But
\[
\beta^s\in w(S\alpha_1,\ldots,S\alpha_d)^s=w((S\alpha_1)^s,\ldots,(S\alpha_d)^s)=w(S\alpha_1,\ldots,S\alpha_d).
\]
It follows that $|w(S\alpha_1,\ldots,S\alpha_d)|\geq2$. In particular, $w(S\alpha_1,\ldots,S\alpha_d)$ cannot be equal to the singleton $\{w(\alpha_1,\ldots,\alpha_d)\}$, as required.
\end{proof}

\begin{proof}[Proof of Lemma \ref{psl23fLem}]
We view $S=\PSL_2(3^f)$ as the subgroup of $\PGL_2(3^f)$ consisting of the images under the canonical projection $\GL_2(3^f)\rightarrow\PGL_2(3^f)$ of all matrices in $\GL_2(3^f)$ whose determinant is a square in $\IF_{3^f}$. Note that the order of every element of $S\alpha$ is divisible by $f$ and that by \cite[Proposition 4.1]{Har92a}, $(S\alpha)^f$ lies in some copy of $\PGL_2(3)\cong\Sym(4)$ inside the simple Chevalley group $A_1(\overline{\IF_3})$ containing $S$. In particular, the order of $(S\alpha)^f$ is an element of $\{1,2,3,4\}$, so we are done if we can show the following two statements:
\begin{itemize}
\item For all $s\in S$, $\ord((s\alpha)^f)\notin\{1,3\}$.
\item There is an $s\in S$ with $\ord((s\alpha)^f)=4$.
\end{itemize}
Let us start with the first statement. Write $\alpha=s'\delta\phi$ where $s'\in S$, $\delta$ is any fixed element of $\PGL_2(3^f)\setminus\PSL_2(3^f)$, and $\phi$ is a field automorphism of order $f$ (not necessarily the entry-wise Frobenius automorphism $x\mapsto x^3$). Then for each $s\in S$,
\[
(s\alpha)^f=(ss'\delta\phi)^f=(ss'\delta)(ss'\delta)^{\phi}\cdots(ss'\delta)^{\phi^{f-1}},
\]
and so, since $ss'\delta\in\PGL_2(3^f)\setminus\PSL_2(3^f)$ and $f$ is odd, it follows that the order of $(s\alpha)^f$ is even. This concludes the proof of the first statement.

For the second statement, denote again by $\phi$ the common field part of the elements of $S\alpha$. Since $\PGL_2(3)\cong\Sym(4)$, we have that $\PGL_2(3)\setminus\PSL_2(3)=\PGL_2(3)\setminus\PGL_2(3)'$ contains an element $g\zeta\GL_2(3)$ of order $4$. Observe that the lift $g\in\GL_2(3)$ of $g\zeta\GL_2(3)$ must have determinant $-1$, for its determinant must be a non-square in $\IF_3$. But since $f$ is odd, $-1$ is also a non-square in $\IF_{3^f}$, so $\beta:=g\zeta\GL_2(3^f)$ lies in $\PGL_2(3^f)\setminus\PSL_2(3^f)$ and also has order $4$. Since $\beta$ is centralized by $\phi$ and $\gcd(4,f)=1$, it follows that $\beta\phi\in S\alpha$ has order $4f$, as required.
\end{proof}

We are now ready for the

\begin{proof}[Proof of Theorem \ref{newMBTheo}]
Let us start with the proof of statement (2), because it is shorter and easier. Firstly, note that since $x^8$ is not MB by \cite[Theorem 1.1.2(1)]{Bor17a}, we also have that $x^{8k}$ is not MB for any $k\in\IN^+$ (if a finite group satisfies a probabilistic identity with respect to $x^8$ and $\rho$, it also satisfies one with respect to $x^{8k}$ and $\rho$). We may thus assume that $e\equiv4\Mod{8}$; in other words, $e=4f$ for some odd $f\in\IN^+$ with $f>1$. But by Lemma \ref{psl23fLem}, if $\alpha\in\Aut(\PSL_2(3^f))$ is as in the formulation of Lemma \ref{psl23fLem}, then $(\PSL_2(3^f)\alpha)^{e}=\{1\}$, whence $e$ is bad by \cite[Proposition 2.9(3)]{Bor17a}.

We now give the proof of statement (1). First, note that the assumption implies that each prime divisor of $m$ is larger than $226$. We need to show that for every nonabelian finite simple group $S$ and all $\alpha\in\Aut(S)$, $(S\alpha)^{2m}\not=\{\alpha^{2m}\}$. By \cite[Theorem 5.1]{Bor17a}, it suffices to show this for $S$ of one of the two forms $\PSL_2(p^f)$ or $\Suz(2^{2k+1})$. In what follows, we denote by $\Phi_S$ the field automorphism group of $S$, which is cyclic, generated by $\phi$, the entry-wise Frobenius automorphism $a\mapsto a^p$ (for this to make sense in the Suzuki case, view $\Suz(2^{2k+1})$ as a subgroup of $\GL_4(2^{2k+1})$ as in \cite{Suz60a}). As in the proof of Lemma \ref{psl23fLem} above, we view $\PSL_2(p^f)$ as a subgroup of $\PGL_2(p^f)=\Inndiag(\PSL_2(p^f))$, and we denote the image of a matrix $M\in\GL_2(p^f)$ under the canonical projection $\GL_2(p^f)\rightarrow\PGL_2(p^f)$ by $\overline{M}$.
\begin{enumerate}
\item Case: $S=\Suz(2^{2k+1})$. Then $\Out(S)=\Phi_S=\langle\phi\rangle$, so we may choose $\alpha=\phi^t$ for some $t\in\{0,\ldots,2k\}$. Then $\alpha$ centralizes $\Frob(20)\cong\Suz(2)\leq S$. In particular, there is an $s\in S$ of order $5\nmid 2m$ centralized by $\alpha$. It follows that $(s\alpha)^{2m}=s^{2m}\alpha^{2m}\not=\alpha^{2m}$.
\item Case: $S=\PSL_2(p^f)$. We make a subcase distinction:
\begin{enumerate}
\item Subcase: $p=2$. Then $\Out(S)=\Phi_S=\langle\phi\rangle$, so we may choose $\alpha=\phi^t$ for some $t\in\{0,\ldots,f-1\}$. Then $\alpha$ centralizes $\Sym(3)\cong\PSL_2(2)\leq S$. In particular, there is an $s\in S$ of order $3\nmid 2m$ centralized by $\alpha$. It follows that $(s\alpha)^{2m}=s^{2m}\alpha^{2m}\not=\alpha^{2m}$.
\item Subcase: $p>2$. Then
\[
\Out(S)=\Outdiag(S).\Phi_S=\langle\overline{\begin{pmatrix}\xi & 0 \\ 0 & 1\end{pmatrix}}S\rangle.\langle\phi\rangle
\]
where $\xi$ is some fixed generator of $\IF_{p^f}^{\ast}$. We may thus choose $\alpha=\overline{\begin{pmatrix}\xi & 0 \\ 0 & 1\end{pmatrix}}^{\epsilon}\phi^t$ for some $\epsilon\in\{0,1\}$ and some $t\in\{0,\ldots,f-1\}$. If $\epsilon=0$, then we can conclude as in Subcase 1, using that $\PSL_2(p)$ contains an element of order $3$. So assume that $\epsilon=1$. We make a subsubcase distinction:
\begin{enumerate}
\item Subsubcase: $p\geq7$ or $\gcd(f,t)>1$. Note that the centralizer of $\alpha$ in $\Inndiag(S)=\PGL_2(p^f)$ contains the element
\begin{align*}
\alpha^{\ord(\phi^t)}&=\alpha^{f/\gcd(f,t)}=\overline{\begin{pmatrix}\prod_{k=0}^{f/\gcd(f,t)-1}{\xi^{p^{kt}}} & 0 \\ 0 & 1\end{pmatrix}} \\
&=\overline{\begin{pmatrix}\prod_{k=0}^{f/\gcd(f,t)-1}{\xi^{p^{k\gcd(f,t)}}} & 0 \\ 0 & 1\end{pmatrix}}=\overline{\begin{pmatrix}\xi^{\frac{p^f-1}{p^{\gcd(f,t)}-1}} & 0 \\ 0 & 1\end{pmatrix}},
\end{align*}
whose order is $p^{\gcd(f,t)}-1$. In particular, since $[\Inndiag(S):S]=2$, there is an $s\in S$ of order $(p^{\gcd(f,t)}-1)/2$ centralized by $\alpha$. We will now argue that $(p^{\gcd(f,t)}-1)/2$ does not divide $2m$, then we can conclude as in Subcase 1. To that end, note that by the subsubcase assumption, $(p^{\gcd(f,t)}-1)/2>2$, so it suffices to show that $(p^{\gcd(f,t)}-1)/2$ is not of the form $n$ or $2n$ for some $n>1$ that is odd and satisfies the congruence $n\equiv 1\Mod{225}$.
\begin{itemize}
\item If $\frac{p^{\gcd(f,t)}-1}{2}=n$: Then $2n+1=p^{\gcd(f,t)}$. By assumption, $n\equiv1\Mod{3}$, so that $3\mid 2n+1$ and thus $p=3$. But $2n+1>3$, so one would need to have $9\mid 2n+1$, which is impossible since $n\equiv1\Mod{9}$ by assumption.
\item If $\frac{p^{\gcd(f,t)-1}}{2}=2n$: Then $4n+1=p^{\gcd(f,t)}$. By assumption, $n\equiv1\Mod{5}$, so that $5\mid 4n+1$ and thus $p=5$. But $4n+1>5$, so one would need to have $25\mid 4n+1$, which is impossible since $n\equiv1\Mod{25}$ by assumption.
\end{itemize}
\item Subsubcase: $p\in\{3,5\}$ and $\gcd(f,t)=1$ (i.e., $\phi^t$ is a generator of $\Phi_S$). By Lemma \ref{nonConstantLem}, it suffices to show that $(S\alpha)^{2m}\not=\{1\}$. Since $f=\ord(\phi^t)\mid\ord(s\alpha)$ for all $s\in S$, this is clear if $f\nmid 2m$, so assume that $f\mid 2m$. Note that by the argument from the previous subsubcase, we always have that $(p-1)f\mid\ord(\alpha)$. In particular, if $p=5$, or if $p=3$ and $f$ is even, then $4\mid\ord(\alpha)$, so that $\ord(\alpha)\nmid 2m$ and we are done. So we may assume that $p=3$ and $f$ is odd. But then Lemma \ref{psl23fLem} yields that some element in $S\alpha$ has order divisible by $4$; in particular, the $(2m)$-th power of that element is nontrivial, as required.
\end{enumerate}
\end{enumerate}
\end{enumerate}
\end{proof}


\begin{thebibliography}{1}

\bibitem{BT08} D.~P.~Bertsekas and J.~N.~Tsitsiklis, \emph{Introduction to Probability},
Athena Scientific, Belmont, MA, 2008.

\bibitem{Bor17}
A.~Bors, Fibers of automorphic word maps and an application to composition factors,  \emph{J. Group Theory}
\textbf{20}(6):1103-1134, 2017.

\bibitem{Bor17a}
A.~Bors,
Fibers of word maps and the multiplicities of nonabelian composition factors,
\emph{Internat.~J.~Algebra Comput.} \textbf{27}(8):1121--1148, 2017.

\bibitem{HH56a}
P.~Hall and G.~Higman,
On the $p$-length of $p$-soluble groups and reduction theorems for Burnside's problem,
\emph{Proc.~London Math.~Soc.~(3)} \textbf{6}:1--42, 1956.

\bibitem{Har92a}
B.~Hartley,
A general Brauer-Fowler theorem and centralizers in locally finite simple groups,
\emph{Pacific J.~Math.} \textbf{152}(1):101--117, 1992.

\bibitem{KS15a}
E.I.~Khukhro and P.~Shumyatsky,
Nonsoluble and non-$p$-soluble length of finite groups,
\emph{Israel J.~Math.} \textbf{207}:507--525, 2015.

\bibitem{LS16} M.~Larsen and A.~Shalev, A probabilistic Tits alternative
and probabilistic identities,
{\it Algebra and Number Theory} \textbf{10}: 1359--1371, 2016.

\bibitem{LS18a}
M.~Larsen and A.~Shalev,
Words, Hausdorff dimension and randomly free groups,
\emph{Math.~Ann.} \textbf{371}(3--4):1409--1427, 2018.

\bibitem{MTVV18} A.~Martino, M.C.H. Tointon, M. Valunis and E. Ventura,
Probabilistic nilpotence in infinite groups, arXiv:1805.11520.

\bibitem{Nik16a} N.~Nikolov, Verbal width in anabelian groups,
\emph{Israel J.~Math.} \textbf{216}(2):847--876, 2016.

\bibitem{Rob96a}
D.J.S.~Robinson,
\emph{A Course in the Theory of Groups},
Springer (Graduate Texts in Mathematics, 80), New York, 2nd ed.~1996.

\bibitem{Seg18a}
D.~Segal,
Remarks on profinite groups having few open subgroups,
\emph{J.~Comb.~Algebra} \textbf{2}(1):87--101, 2018.

\bibitem{Sh18} A.~Shalev, Probabilistically nilpotent groups, \emph{Proc. Amer. Math. Soc.}
\textbf{146}:1529-–1536, 2018.

\bibitem{Suz60a}
M.~Suzuki,
A new type of simple groups of finite order,
\emph{Proc.~Nat.~Acad.~Sci.~U.S.A.} \textbf{46}:868--870, 1960.

\bibitem{V96} L.~R.~Vermani, \emph{Elements of Algebraic Coding Theory}, Chapman and Hall, 1996.

\bibitem{Z90} E.I. Zelmanov, Solution of the restricted Burnside problem for groups of odd exponent,
(Russian) \emph{Izv. Akad. SSSR Ser. Mat.} \textbf{54}:42-–59, 1990; translation in \emph{Math
USSR-Izv.} \textbf{36}:41–-60, 1990.

\bibitem{Z91} E.I. Zelmanov, Solution of the restricted Burnside problem for 2-groups, (Russian)
\emph{Mat. Sb.} \textbf{182}:568-–592, 1991; translation in \emph{Math USSR-Sb} \textbf{72}:543-–565, 1992.
\end{thebibliography}
\end{document}